\NeedsTeXFormat{LaTeX2e}

\documentclass[10pt]{article}

\usepackage{tikz}
\usetikzlibrary{matrix,arrows,decorations.pathmorphing,shapes.geometric}
\usepackage{tikz-cd}

\parskip=3pt

\usepackage[utf8]{inputenc}
\usepackage{a4wide}
\usepackage{graphicx}
\usepackage[hmarginratio={1:1},     
vmarginratio={1:1},     
textwidth=16cm,        
textheight=22cm,
heightrounded,]{geometry}

\usepackage{amsmath,accents}
\usepackage{amsthm}
\usepackage{amssymb}
\usepackage{soul}

\usepackage{mathrsfs, mathtools}
\usepackage{overpic}
\usepackage{enumitem}

\let\emph\undefined
\newcommand{\emph}[1]{\textsl{#1}}
\let\itshape\undefined
\let\itshape\slshape

\usepackage{hyperref}

\numberwithin{equation}{section}
\usepackage{mathtools}
\mathtoolsset{showonlyrefs}
\numberwithin{equation}{section}

\newtheoremstyle{style1}
{13pt}
{13pt}
{}
{}
{\normalfont\bfseries}
{.}
{.5em}
{}

\theoremstyle{style1}

\newtheorem{definition}[equation]{Definition}
\newtheorem{example}[equation]{Example}
\newtheorem{remark}[equation]{Remark}

\newcommand{\catf}[1]{{\mathbf{#1}}}

\usepackage{tocloft}

\newtheoremstyle{style2}
{13pt}
{13pt}
{\slshape}
{}
{\normalfont\bfseries}
{.}
{.5em}
{}

\theoremstyle{style2}

\newtheorem{lemma}[equation]{Lemma}
\newtheorem{theorem}[equation]{Theorem}
\newtheorem{proposition}[equation]{Proposition}
\newtheorem{corollary}[equation]{Corollary}

\usepackage{tikz}
\usetikzlibrary{matrix,arrows,decorations.pathmorphing,shapes.geometric}
\usepackage{tikz-cd}

\bibliographystyle{halpha}



\newcommand{\R}{\mathbb{R}}

\newcommand{\N}{\mathbb{N}}

\newcommand{\St}{\operatorname{St}}
\newcommand{\Un}{\operatorname{Un}}

\usepackage{needspace}

\newcommand{\SO}{\operatorname{SO}}
\newcommand{\Or}{\operatorname{O}}

\newcommand{\Sing}{\operatorname{Sing}}

\newcommand{\holimsub}[1]{\underset{#1}{\operatorname{holim}}\,}
\newcommand{\hocolimsub}[1]{\underset{#1}{\operatorname{hocolim}}\,}
\newcommand{\colimsub}[1]{\underset{#1}{\operatorname{colim}}\,}

\newcommand{\Cospan}{\catf{Cospan}}
\newcommand{\PreCob}{\catf{PreCob}}
\newcommand{\Top}{\catf{Top}}
\newcommand{\PBun}{\catf{PBun}}

\newcommand{\cat}[1]{\mathcal{#1}}

\newcommand{\Int}{\mathbf{Int}}

\newcommand{\End}{\mathsf{End}}
\newcommand{\Hom}{\operatorname{Hom}}

\newcommand{\id}{\text{id}}

\newcommand{\Cob}{{\catf{Cob}}}
\newcommand{\iC}{{(\infty ,1)}}

\newcommand{\DS}{\text{/\hspace{-0.1cm}/}}

\newcommand{\Set}{\catf{Set}}

\let\to\undefined
\newcommand{\to}{\longrightarrow}
\let\mapsto\undefined
\newcommand{\mapsto}{\longmapsto}

\newcommand{\sSet}{\catf{sSet}}
\newcommand{\opp}{\text{opp}}
\let\Bar\undefined
\newcommand{\Bar}{\operatorname{B}}
\newcommand{\EAlg}{E_\infty\text{-}\catf{Alg}}

\DeclareMathSymbol{\Phiit}{\mathalpha}{letters}{"08} 
\DeclareMathSymbol{\Psiit}{\mathalpha}{letters}{"09}
\DeclareMathSymbol{\Sigmait}{\mathalpha}{letters}{"06}
\DeclareMathSymbol{\Xiit}{\mathalpha}{letters}{"04}
\DeclareMathSymbol{\Piit}{\mathalpha}{letters}{"05}\let\Pi\undefined\newcommand{\Pi}{\Piit}
\DeclareMathSymbol{\Gammait}{\mathalpha}{letters}{"00}
\DeclareMathSymbol{\Omegait}{\mathalpha}{letters}{"0A}
\DeclareMathSymbol{\Upsilonit}{\mathalpha}{letters}{"07}
\DeclareMathSymbol{\Thetait}{\mathalpha}{letters}{"02}

\let\Phi\undefined\newcommand{\Phi}{\Phiit}
\let\Sigma\undefined\newcommand{\Sigma}{\Sigmait}
\let\Psi\undefined\newcommand{\Psi}{\Psiit}
\let\Gamma\undefined\newcommand{\Gamma}{\Gammait}

\title{Extended primitive HQFTs}
\author{Notes by Lukas Müller and Lukas Woike}

\definecolor{Blue}  {rgb} {0.282352,0.239215,0.803921}
\definecolor{Green} {rgb} {0.133333,0.545098,0.133333}
\definecolor{Red}   {rgb} {0.803921,0.000000,0.000000}
\definecolor{Violet}{rgb} {0.580392,0.000000,0.827450}

\newcounter{jfc}


\begin{document}

	\begin{flushright}
		\small
		{\sf EMPG--18--17} \\
		{\sf [ZMP-HH/18-20]} \\
		\textsf{Hamburger Beiträge zur Mathematik Nr. 751}\\
		\textsf{September 2018}
	\end{flushright}
	
	\vspace{10mm}
	
	\begin{center}
		\textbf{\LARGE{Equivariant Higher Hochschild Homology and Topological Field Theories}}\\
		\vspace{1cm}
		{\large Lukas Müller $^{a}$} \ \ and \ \ {\large Lukas Woike $^{b}$}
		
		\vspace{5mm}
		
		{\em $^a$ Department of Mathematics\\
			Heriot-Watt University\\
			Colin Maclaurin Building, Riccarton, Edinburgh EH14 4AS, U.K.}\\
		and {\em Maxwell Institute for Mathematical Sciences, Edinburgh, U.K.}\\
		Email: \ {\tt lm78@hw.ac.uk \ }
		\\[7pt]
		{\em $^b$ Fachbereich Mathematik \\ Universit\"at Hamburg\\
			Bereich Algebra und Zahlentheorie\\
			Bundesstra\ss e 55, \ D\,--\,20\,146\, Hamburg }\\
		Email: \ {\tt  lukas.jannik.woike@uni-hamburg.de\ }
	\end{center}
	\vspace{1cm}
	\begin{abstract}
		\noindent We present a version of higher Hochschild homology for spaces equipped with principal bundles for a structure group $G$. 
		As coefficients,
		 we allow $E_\infty$-algebras with $G$-action. 
		For this homology theory, we establish an equivariant version of excision and prove that it extends to an equivariant 
		topological field theory with values in the $(\infty,1)$-category of cospans of $E_\infty$-algebras.
	\end{abstract}

	\tableofcontents
	
	\newpage
	\section{Introduction}
	
	Equivariant topological field theory \cite{turaevhqft} is one of the many different flavors of topological field theory. 
	For a fixed group $G$,
	 a $G$-equivariant topological field theory assigns algebraic quantities to manifolds and bordisms \emph{equipped with a principal $G$-bundle}, which is conveniently modelled by a map into the classifying space $BG$ of the group $G$.
	It turns out that the decoration of the bordisms with $G$-bundles adds new phenomena to the picture and leads to rich algebraic structures. 
	One key aspect responsible for this is the built-in invariance of equivariant topological field theories under the mapping class group action on the groupoid of $G$-bundles over a given manifold. 
	
	So far, the study of equivariant topological field theory concentrates mainly on non-extended field theories with values in vector spaces
	and on the (once extended) 3-2-1-dimensional theories which by evaluation on the circle can be related to equivariant modular categories \cite{turaevhqft, maiernikolausschweigerteq,htv,hrt,schweigertwoikeofk,extofk,MuellerWoike1}. However, in these approaches, higher categorical structures have been mostly left aside. 
	
This article brings together equivariant topological field theories and the powerful homotopy-theoretic techniques that have by now become standard in the study of non-equivariant topological field theory such as complete Segal spaces, factorization homology and factorization algebras, see \cite{ha,af,ginot,cg} for the general background and \cite{lurietft,claudiathesis,benzvibrochierjordan} for the relation to field theories. 
	On the one hand, our motivation is the study of higher analogues of the algebraic structures produced by non-homotopical equivariant field theory; on the other hand we also have a concrete class of examples in mind
	which actually requires a treatment within the framework of homotopy theory, see below.
	
	Thus, we lift the axioms of an equivariant topological field theory to a homotopical setting (Definition~\ref{defhomotopicalequivtft}), by which we mean that we work with $(\infty,1)$-categories.
	This definition is very natural in the sense that the homotopy invariance axiom of ordinary equivariant topological field theories is automatically encoded in a coherent way. The needed equivariant version of the $(\infty,1)$-bordism category (Section~\ref{gbordsec}) will be based on \cite{CalaqueScheimbauer}.
	
	Since already the study of the non-homotopical case teaches us that equivariant topological field theories are highly constrained and hence rare objects, our focus will lie on the production of a first class of examples. 
	It will be provided by a version of higher Hochschild homology of spaces equipped with a principal $G$-bundle.
 As we prove, it gives rise to a homotopical equivariant topological field theory.
	
	The construction of equivariant higher Hochschild homology for a space equipped with a $G$-bundle
	will be based on the most crucial feature of a homology theory:  the local-to-global principle.
It allows to recover the homology of a space by applying a prescribed algebraic procedure to the homology of simpler spaces that our space of interest is built from and thus ensures computability.  
	
	Important instances of local-to-global principles for homology theories include:
	
	\begin{itemize}

	\item Ordinary homology for topological spaces satisfying the Eilenberg-Steenrod axioms, see e.g.\ \cite[IV.6]{bredon}, is determined by its value on the point where it yields a certain Abelian group (referred to as the coefficients). The homology on more complicated spaces can be computed by excision (or the Mayer-Vietoris sequence).  
		
	\item For higher derived Hochschild homology \cite{pirashvili,gtz} the coefficients are $E_\infty$-algebras. Again, this homology theory is determined by the value on the point. To compute it on more complicated spaces one can use the excision property, which means in this case the preservation of homotopy pushouts.

	\item Factorization homology \cite{ha,af} provides an invariant for manifolds. 
	For $n$-dimensional mani\-folds the coefficients are $E_n$-algebras. They prescribe the value of factorization homology on $n$-dimensional disks.
	 Again, an excision property is key for its computation on more complicated manifolds. Factorization homology is intimately related to factorization algebras \cite{cg}.


	\end{itemize}
	The construction principle for all these theories is given roughly as follows: Prescribe the homology theory on a certain class of spaces (ideally, very simple ones that allow to build more complicated spaces: the point, disks, contractible spaces etc.) by using certain algebraic objects, the coefficients (Abelian groups, certain types of algebras etc.). Then the homology theory is extended to all spaces of interest by a suitable (derived enriched) version of left Kan extension. 
	
The coefficients for our equivariant version of higher Hochschild chains are $E_\infty$-algebras with $G$-action. Following the guiding principles outlined above, the equivariant higher Hochschild chains $\int_{\varphi } A$ for an arbitrary map $\varphi : X \to BG$ and an $E_\infty$-algebra with $G$-action as coefficients are computed via left Kan extension. As the name suggests, equivariant higher Hochschild chains generalize ordinary higher Hochschild chains.
	Up to equivalence,
	$\int_{\varphi} A$ only depends on the homotopy class of $\varphi$ or equivalently the isomorphism class of the $G$-bundle classified by $\varphi$ and it carries a representation of the automorphism group of that bundle (Proposition~\ref{eqHHfunctorprop}).  
Moreover, equivariant higher Hochschild homology is essentially constant on mapping class group orbits (Proposition~\ref{propmcgorbits}).

In Section~\ref{mainthm} we state our main result: Equivariant higher Hochschild homology gives rise to a homotopical equivariant topological field theory. 
\\[2ex]
\noindent {\textbf{Theorem~\ref{thmmain}.} \itshape
For a group $G$, let $A$ be a $G$-equivariant $E_\infty$-algebra in chain complexes over a field, i.e.\ an $E_\infty$-algebra with $G$-action.
Then $G$-equivariant higher Hochschild chains 
with coefficients $A$
naturally extend, for any $n\ge 1$, to an $n$-dimensional homotopical equivariant topological field theory, i.e.\
a symmetric monoidal $(\infty,1)$-functor
\begin{align} Z_A = \int_? A : G\text{-}\Cob(n) \to \Cospan (\EAlg)  \label{thehhqft}
\end{align} with values in cospans of $E_\infty$-algebras.} \\[2ex]
Here, the symmetric monoidal target category $\Cospan (\EAlg)$ of cospans of $E_\infty$-algebras is obtained by (dualizing) the results of \cite{haugseng}. 
The target $\Cospan (\EAlg)$ is related to the Morita category of $E_\infty$-algebras, see Example~\ref{excospans}.

A crucial ingredient of the proof of the main Theorem is 
Lurie's $(\infty,1)$-version of the Grothendieck construction \cite[Section~2.2.1 \& 3.2]{htt} 
and an excision property of equivariant higher Hochschild homology (Proposition~\ref{satzexcision}) taking into account not only the gluing of spaces, but also of maps to $BG$. It translates into the needed gluing property of the homotopical equivariant topological field theory.

In Remark~\ref{bordismhypothesisrem} we explain how the existence of the homotopical equivariant topological field theory of Theorem~\ref{thmmain} can also be derived from the cobordism hypothesis.


As an application of our results we compute the equivariant higher Hochschild homology of the circle equipped with a map to $BG$ (Example~\ref{excircle}) and find a $G$-twisted version of ordinary Hochschild homology.
 In Section~\ref{exdw} we make a suggestion on how to describe (equivariant) Dijkgraaf-Witten models in the given setup and justify our choice by orbifoldization arguments. 
 
The present article only treats the $(\infty,1)$-version of equivariant topological field theory, however, not the fully extended version. We avoided the technical complication that would have been associated with the full extension in order to work out more clearly the aspects coming from the decoration with $G$-bundles. 
 For the same reason, we only used a $G$-equivariant version of higher Hochschild homology instead of full-fledged factorization homology.
 Also we concentrated on discrete group rather than topological ones.  
 We hope to come back to these mentioned possibilities of generalization in future work.

\subparagraph{A warning about the nomenclature.}
In \cite{turaevhqft} $G$-equivariant topological field theories are referred to as \emph{homotopy quantum field theories with aspherical target}. Here the word \emph{homotopy} refers to the homotopy invariance property which means that the linear maps assigned to a bordism and a map to $BG$ remains invariant when the map is changed by a homotopy relative boundary.
However, the word homotopy does \emph{not} indicate the use of homotopy-theoretic machinery as we want to apply it in our article. For this reason we have avoided in the above introduction and also in the main text below the word \emph{homotopy quantum field theory with aspherical target} and replaced it with the term \emph{equivariant topological field theory} used also in \cite{maiernikolausschweigerteq,schweigertwoikeofk,extofk,MuellerWoike1}.

	\subparagraph{Acknowledgements.}
We would like to thank 
Marco Benini, 
Adrien Brochier, 
Severin Bunk, 
Tobias Dyckerhoff,
Rune Haugseng, 
Emily Riehl,
Claudia Scheimbauer, 
Alexander Schenkel, 
Christoph Schweigert, 
Richard Szabo, 
Lóránt Szegedy 
and Tim Weelinck 
for helpful discussions.

LM is supported by the Doctoral Training Grant ST/N509099/1 from the UK Science and Technology Facilities
Council (STFC).
LW is supported by the RTG 1670 ``Mathematics inspired by String theory and Quantum
Field Theory'' and thanks the Heriot-Watt University in Edinburgh and in particular Richard Szabo for their hospitality during the time when part of this project was completed.

\needspace{10\baselineskip}
\section{Higher Hochschild homology for spaces with maps into the classifying space of a group}
In this section we define for a given $E_\infty$-algebra $A$ in differential graded vector spaces with action by a group $G$ and a map $\varphi :X \to BG$ from a space (or simplicial set) $X$ to the classifying space of $G$ the equivariant higher Hochschild chains $\int_{\varphi } A$ of $\varphi$ with coefficients in $A$. The equivariant higher Hochschild chains are again an $E_\infty$-algebra in differential graded vector spaces. 

	Denote by $\sSet$ the category of simplicial sets equipped with its usual Quillen model structure, see e.g.\ \cite{goerssjardine}. For any simplicial set $X$ the slice category $\sSet / K$ of maps to a simplicial set $K$ carries a model structure created by the forgetful functor $\sSet / K \to \sSet$ \cite{hirschhorn}. 
	
	For simplicial sets $X$ and $Y$ we denote by $\sSet(X,Y)$ the internal hom. The simplicial enrichment of $\sSet$ induces a simplicial enrichment of $\sSet / K$, see e.g.\ \cite{stevenson}: For $\varphi :X \to K$ and $\psi : Y \to K$ the morphism space $\sSet / K (\varphi :X \to K,\psi : Y \to K )$ is defined as the pullback
	\begin{equation}
		\begin{tikzcd}
		\sSet / K (\varphi :X \to K,\psi : Y \to K ) \ar{rr}{} \ar[swap]{dd}{} & & \sSet (X,Y) \ar{dd}{\psi_*} \\
			& & \\
			\star \ar{rr}{\varphi} & & \sSet(X,K)
		\end{tikzcd}
	\end{equation}
	
	For any group $G$ the classifying space $BG \in \sSet$ comes with a basepoint $\star \to BG$. In the following lemma we compute the derived enriched morphism space $\mathbb{R}	\sSet / BG (\star \to BG,\varphi :X \to BG )$ from $\star \to BG$ to any object $\varphi : X \to BG$ in $\sSet / BG$: 
	
	\begin{lemma}
		For any group $G$ and any map $\varphi : X \to BG$ there is weak equivalence
		\begin{align}
		\mathbb{R}	\sSet / BG (\star \to BG,\varphi :X \to BG ) \simeq \varphi^* EG ,
			\end{align} where $\varphi^* EG$ is the total space of the pullback of the universal $G$-bundle $EG \to BG$ along $\varphi$. 
		\end{lemma}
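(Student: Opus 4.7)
The plan is to reduce the derived mapping space to a standard homotopy-fiber computation, so that both sides of the claimed equivalence are recognized as models of the same homotopy fiber. First I would observe that the source $\star \to BG$ is already cofibrant in the slice model structure on $\sSet / BG$: its underlying simplicial set $\star$ is cofibrant, and the initial object of the slice is $\emptyset \to BG$, so $\emptyset \to \star$ being a cofibration makes $\star \to BG$ cofibrant. Hence to compute the derived enriched hom out of $\star \to BG$ it is enough to replace the target fibrantly, i.e.\ to factor $\varphi$ as $X \xrightarrow{\sim} \tilde{X} \twoheadrightarrow BG$ through a weak equivalence followed by a Kan fibration $\tilde{\varphi} : \tilde{X} \to BG$, which is a fibrant replacement in $\sSet / BG$.

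Next I would unpack the pullback formula for the enrichment of $\sSet / BG$ displayed in the excerpt. Using $\sSet(\star, Y) \cong Y$ for every simplicial set $Y$, the defining pullback collapses to the strict fiber $\tilde{X} \times_{BG} \star$ of $\tilde{\varphi}$ over the basepoint of $BG$. Since $\tilde{\varphi}$ is a Kan fibration, this strict fiber is a model for the homotopy fiber of $\varphi$ over the basepoint.

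To conclude, I would identify this homotopy fiber with $\varphi^{*} EG$. The universal bundle $EG \to BG$ is itself a Kan fibration whose total space is contractible, so the strict pullback $X \times_{BG} EG = \varphi^{*} EG$ is a homotopy pullback and hence also computes the homotopy fiber of $\varphi$ over the basepoint of $BG$. Two simplicial sets modelling the same homotopy fiber are weakly equivalent, which yields the equivalence asserted in the lemma.

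The main technical point, which I expect to be the principal obstacle, is checking that the pullback formula from the excerpt really does compute the derived mapping space once the target is replaced fibrantly. This uses that $\sSet / BG$ is a simplicial model category with enrichment exactly the one defined in the display above the lemma, as in the reference by Stevenson cited in the excerpt; once this is in place, the two independent identifications of both sides with the homotopy fiber of $\varphi$ give the weak equivalence without any further calculation.
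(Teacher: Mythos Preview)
Your proposal is correct and follows essentially the same route as the paper: replace the target fibrantly, identify the slice hom out of $\star \to BG$ with the strict fiber of the fibrant replacement, recognize this as a homotopy pullback, and then use that $EG \to BG$ is a fibration with contractible total space to identify $\varphi^* EG$ with the same homotopy pullback. The only cosmetic difference is that the paper phrases the last step as replacing the leg $\star \to BG$ by the weakly equivalent $EG \to BG$ in the cospan and invoking homotopy invariance of the homotopy pullback, whereas you phrase it as both sides computing the homotopy fiber of $\varphi$; these are the same observation.
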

	
	\begin{proof} All objects in $\sSet / BG$ are cofibrant, hence we obtain by definition
		\begin{align} 	
		\mathbb{R}	\sSet / BG (\star \to BG,\varphi :X \to BG ) \simeq 	\sSet / BG (\star \to BG,\pi : Y \to BG ),
		\end{align} where $\pi : Y \to BG$ is a fibrant replacement of $\varphi : X \to BG$, i.e.\ $\pi : Y \to BG$ is a fibration such that $\pi \circ \iota = \varphi$ for a trivial cofibration $\iota : X \to Y$. But $	\sSet / BG (\star \to BG,\pi : Y \to BG )$ is the pullback of the cospan $\star \to BG \stackrel{\pi}{\longleftarrow} Y$. Since $\pi$ is a fibration and $Y$ is fibrant, this pullback is equivalent to the homotopy pullback \cite[Proposition~A.2.4.4]{htt} and hence, by homotopy invariance of the homotopy pullback, equivalent to the homotopy pullback of the cospan $EG \to BG \stackrel{\varphi}{\longleftarrow} X$. 
		Here we have used that $EG \to \star$ is a trivial fibration. This homotopy pullback is, again by \cite[Proposition~A.2.4.4]{htt}, equivalent to the ordinary pullback since $EG \to BG$ is a fibration.  
		But the pullback of $EG \to BG \stackrel{\varphi}{\longleftarrow} X$ is by definition just $\varphi^* EG$. 
		\end{proof}

	\begin{remark} For any map $\varphi :X \to BG$ the space $\varphi^* EG$ comes with a right $G$-action, i.e.\ it is a functor $\star \DS G^\opp \to \sSet$. Here $\star \DS G$ is the groupoid with one object and automorphism group $G$. Moreover, $G^\opp$ is the group with underlying set $G$ equipped with the opposite multiplication $g \cdot^\opp h := h g$ for $g,h \in G$. \end{remark}

	As the coefficients for the homology theory of spaces with map to a classifying space $BG$ we choose an $E_\infty$-algebra with $G$-action. We refer to \cite{bergerfresse} for background on 
	 the $E_\infty$-operad and $E_\infty$-algebras in chain complexes.
	
	\begin{definition}
	For a field $k$ (that we fix through this article) we denote by $\EAlg$ the category of $E_\infty$-algebras in differential graded vector spaces over $k$. For a group $G$ we define a \emph{$G$-equivariant $E_\infty$-algebra} as a functor $\star \DS G \to \EAlg$ from the groupoid with one object and automorphism group $G$ to $\EAlg$
	(i.e.\ as an $E_\infty$-algebra with $G$-action). The category of $G$-equivariant $E_\infty$-algebras will be denoted by $\EAlg^G$. 
	\end{definition}

\begin{remark}
	Instead of $E_\infty$-algebras in differential graded vector spaces, one could consider $E_\infty$-algebras in some other symmetric monoidal $(\infty,1)$-category such as differential graded modules over a commutative ring.
	 In this case, however, the tensor product might not be exact leading to technical difficulties that we would like to avoid.
	\end{remark}
	
	Now for any $G$-equivariant $E_\infty$-algebra $A$ we can consider the left derived functor tensor product
	\begin{align}
	\mathbb{R}	\sSet / BG (\star \to BG,\varphi :X \to BG ) \stackrel{\mathbb{L}}{\otimes}_G A \simeq	\varphi^* EG \stackrel{\mathbb{L}}{\otimes}_G A \label{eqndefeqHH}
		\end{align} which is modeled by the bar construction \cite[Chapter~4 \& 10]{riehl}. 
		We can see this as a derived enriched left Kan extension 
		of $A$ (seen as functor from the full subcategory of $\sSet/BG$ on $\star \to BG$ to $\EAlg$)
		along the inclusion of $\star \to BG$ into $\sSet / BG$. 
		Explicitly, \eqref{eqndefeqHH} is given by the geometric realization of the simplicial $E_\infty$-algebra $B_*(\varphi^* EG,\star\DS G,QA)$ with $n$-simplices given by
	\begin{align}
	B_n(\varphi^* EG,\star\DS G,QA) = \varphi^* EG \times G^n \otimes QA,
		\end{align} where $\otimes$ denotes the $\sSet$-tensoring of $\EAlg$ \cite{fresseco} and $QA$ is a cofibrant replacement of $A$ as an $E_\infty$-algebra (here we use the projective model structure on differential graded vector spaces and the induced model structure on $E_\infty$-algebras). 
	In summary,
	\begin{align}
		\varphi^* EG \stackrel{\mathbb{L}}{\otimes}_G A = \int^n  \left(  \left(  \varphi^*EG \times \Delta_n \times G^n \right) \otimes QA \right) \in \EAlg. \label{defeqnZA}
		\end{align}
		
		\begin{definition}\label{defeqHH}
			For a $G$-equivariant $E_\infty$-algebra $A$ and a map $\varphi : X \to BG$ we define the \emph{$G$-equivariant higher Hochschild chains of $\varphi$ with coefficients $A$} as
			\begin{align} \int_{\varphi } A :=\varphi^* EG \stackrel{\mathbb{L}}{\otimes}_G A \in \EAlg.\end{align}
			\end{definition}

\begin{remark}	For $G=1$ Definition~\ref{defeqHH} reduces to the ordinary (higher) Hochschild homology $CH(X;A) = X \stackrel{\mathbb{L}}{\otimes} A$ of a space $X$ with coefficients in an $E_\infty$-algebra $A$ \cite{pirashvili,gtz}. \end{remark}
	
	\begin{lemma}\label{lemmatrivialbundle}
		For any group $G$ and $A \in \EAlg^G$ there is a canonical weak equivalence
		\begin{align}
			\int_{\varphi } A \simeq X \stackrel{\mathbb{L}}{\otimes} A\ ,
			\end{align} whenever the $G$-bundle classified by $\varphi$ is trivial.
		\end{lemma}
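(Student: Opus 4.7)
The plan is to compute $\int_\varphi A = \varphi^* EG \otimes^{\mathbb{L}}_G A$ directly from the bar-construction model \eqref{defeqnZA}, using that the triviality assumption forces the underlying right $G$-space to be free.

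First I would unpack the triviality hypothesis. The bundle classified by $\varphi$ is trivial precisely when $\varphi$ lifts (up to homotopy) through $EG\to BG$; since $EG$ is contractible, this is equivalent to $\varphi$ being null-homotopic. In that case one has a weak equivalence $\varphi^* EG \simeq X \times G$ of right $G$-spaces (with $G$ acting on itself by right translation). Substituting this into the bar construction
\[ B_*(\varphi^* EG,\star\DS G, QA) \simeq B_*(X \times G,\star\DS G, QA), \]
we are reduced to showing a canonical equivalence $(X\times G)\otimes^{\mathbb{L}}_G A \simeq X \otimes^{\mathbb{L}} A$, i.e.\ that tensoring a free right $G$-space over $G$ with any $G$-equivariant $E_\infty$-algebra forgets the $G$-action.

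Second I would prove this collapse by exhibiting the standard extra degeneracy. The simplicial $E_\infty$-algebra $B_*(X \times G,\star\DS G, QA)$ has $n$-simplices $X \times G^{n+1} \otimes QA$, and the map $s_{-1}\colon X\otimes QA \hookrightarrow B_*(X\times G,\star\DS G, QA)$ that inserts the identity $e\in G$ into every slot yields an augmented simplicial contraction onto the constant simplicial object $X\otimes QA$. Taking geometric realization, and using that the $\sSet$-tensoring $\otimes$ of $\EAlg$ sends simplicial homotopy equivalences to weak equivalences, we obtain
\[ (X \times G) \otimes^{\mathbb{L}}_G A \simeq X \otimes QA \simeq X \otimes^{\mathbb{L}} A, \]
as required.

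The main obstacle is ensuring that the resulting equivalence is truly canonical and not an artefact of choices. A choice of trivialization of the bundle amounts to a choice of $G$-equivariant map $X \times G \to \varphi^* EG$, and any two such choices differ by a self-map of $X \times G$ of the form $(x,g)\mapsto (x, h(x)g)$ for some map $h\colon X \to G$. One should check that this reparametrization induces a simplicial homotopy on $B_*(X \times G,\star\DS G, QA)$ and therefore a homotopy between the two realizations, so the equivalence $\int_\varphi A \simeq X \otimes^{\mathbb{L}} A$ is well-defined in the homotopy category. The remaining verifications, namely that the bar construction is invariant under the initial weak equivalence $\varphi^* EG \simeq X\times G$ and that the contracting homotopy is compatible with the $\sSet$-enrichment of $\EAlg$, are standard and use the cofibrancy of $QA$ together with the fact that $\otimes$ is a left Quillen bifunctor on the relevant model categories.
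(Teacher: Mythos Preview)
Your approach is essentially the same as the paper's: identify $\varphi^* EG$ with $X\times G$ and then exhibit an extra degeneracy on the augmented bar construction $B_*(X\times G,\star\DS G,QA)\to X\otimes QA$ coming from the unit of $G$. Your phrasing of $s_{-1}$ as a single map ``inserting $e$ into every slot'' is slightly imprecise (the extra degeneracy is a family $s_{-1}\colon B_{n-1}\to B_n$ inserting $e$ in one fixed position), but the argument is correct and your additional discussion of canonicality goes beyond what the paper spells out.
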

	
	\begin{proof}
		If $\varphi$ classifies a trivial $G$-bundle, 
		we have $\varphi^* EG \cong X \times G$. 
		Now the $G$-action on $QA$ provides an augmentation of $B_*(\varphi^* EG,\star\DS G,QA)$ by $X \otimes QA$. 
		Using the unit element of $G$ one finds extra degeneracies \cite[Section~4.5]{riehl} for this augmentation, 
		which proves $\int_{\varphi } A \simeq X \otimes QA = X \stackrel{\mathbb{L}}{\otimes} A$. 
		\end{proof}
	
	\begin{corollary}
		For any group $G$ and $A \in \EAlg^G$ the value of equivariant higher Hochschild homology on the point $\star \to BG$ is given by $A$, as an $E_\infty$-algebra. 
		\end{corollary}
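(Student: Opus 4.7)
The plan is to deduce this directly from Lemma~\ref{lemmatrivialbundle}. The basepoint map $\star \to BG$ classifies the unique (hence trivial) $G$-bundle over a point; indeed, the pullback $(\star \to BG)^* EG$ is the fiber $EG_\star \cong G$ of $EG \to BG$ over the basepoint, which is a free (actually trivializable, since the base is a point) $G$-space. Thus Lemma~\ref{lemmatrivialbundle} applies and yields
\begin{align}
\int_{\star \to BG} A \simeq \star \stackrel{\mathbb{L}}{\otimes} A .
\end{align}

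Next I would identify $\star \stackrel{\mathbb{L}}{\otimes} A$ with $A$. Unwinding the definition in \eqref{defeqnZA} (with $G=1$), this derived tensoring is computed by the geometric realization of the simplicial object with $n$-simplices $\star \times \Delta_n \otimes QA = \Delta_n \otimes QA$, which collapses under the $\sSet$-tensoring axioms to $QA$, the cofibrant replacement of $A$. Since $QA \simeq A$ in $\EAlg$, we obtain the desired equivalence $\int_{\star \to BG} A \simeq A$.

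No real obstacle is expected: both steps are formal consequences of previously established facts, namely the triviality of the bundle over a point, Lemma~\ref{lemmatrivialbundle}, and the unit axiom for the $\sSet$-tensoring of $\EAlg$. The only mild subtlety is making sure that $\star \stackrel{\mathbb{L}}{\otimes} A$ really computes $A$ rather than something weakly equivalent but not literally equal; this is handled by passing through the cofibrant replacement $QA$ and then using the weak equivalence $QA \xrightarrow{\sim} A$ in $\EAlg$.
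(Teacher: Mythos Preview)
Your proposal is correct and follows exactly the approach implicit in the paper: the corollary is stated there without proof, as an immediate consequence of Lemma~\ref{lemmatrivialbundle} applied to $X=\star$ together with the unit identity $\star \stackrel{\mathbb{L}}{\otimes} A \simeq QA \simeq A$ for the $\sSet$-tensoring. Your write-up simply makes these two steps explicit.
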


		For later use we record the following technical Lemma: 
		
		\begin{lemma}\label{lemmacofbetcofobj}
		Let $G$ be a group and $A \in \EAlg^G$.
		For any cofibration from $\varphi :X \to BG$ to $\psi : Y \to BG$ in $\sSet / BG$ the induced map
		\begin{align}
			\int_{\varphi } A \to 	\int_{\psi } A
		\end{align} is a cofibration between cofibrant $E_\infty$-algebras. 
		\end{lemma}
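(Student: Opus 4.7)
The plan is to transport the cofibration between objects of $\sSet/BG$ along the bar construction that computes $\int_\varphi A$ and to check that each step preserves cofibrations. First I would unpack the hypothesis: cofibrations in $\sSet/BG$ are created by the forgetful functor to $\sSet$, so the given map is a monomorphism $f:X\hookrightarrow Y$ commuting with the structure maps to $BG$. Pulling back along the Kan fibration $EG\to BG$ yields a monomorphism $f^*EG:\varphi^*EG\to\psi^*EG$ of simplicial sets, which is moreover $G$-equivariant for the right actions inherited from $EG$. This produces a cofibration in the projective model structure on $\sSet^{\star\DS G^{\opp}}$, which will be the input for the bar construction.

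Next I would examine the simplicial $E_\infty$-algebras $B_*(\varphi^*EG,\star\DS G,QA)\to B_*(\psi^*EG,\star\DS G,QA)$ level by level. At simplicial degree $n$ the map is obtained from $f^*EG\times\id_{G^n}$ by tensoring with $\id_{QA}$. Since $f^*EG$ is a cofibration of simplicial sets and $G^n$ is cofibrant, the product $f^*EG\times\id_{G^n}$ is a cofibration in $\sSet$; because $QA$ is cofibrant in $\EAlg$, the pushout-product (SM7) axiom for the simplicial enrichment of $\EAlg$ recalled from~\cite{fresseco} upgrades it to a cofibration of $E_\infty$-algebras. The same axiom, applied to the relative latching maps of $B_*$ (built from insertions of the unit $e\in G$ into one of the factors of $G^n$, which are themselves monomorphisms of simplicial sets), shows that both $B_*(\varphi^*EG,\star\DS G,QA)$ and $B_*(\psi^*EG,\star\DS G,QA)$ are Reedy cofibrant and that the induced map between them is a Reedy cofibration.

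Finally I would take the geometric realization. By~\eqref{defeqnZA} the invariant $\int_\varphi A$ is precisely the realization of this simplicial $E_\infty$-algebra, and realization (equivalently, the coend $\int^n(-)_n\otimes\Delta_n$) is a left Quillen functor from the Reedy model structure on simplicial $E_\infty$-algebras to $\EAlg$. Hence it carries the Reedy cofibration of the previous paragraph to a cofibration in $\EAlg$, establishing the asserted property of the map. Cofibrancy of the source and the target then follows by specializing the whole argument to the unique cofibration $(\emptyset\to BG)\hookrightarrow(\varphi:X\to BG)$ in $\sSet/BG$: the pullback $\emptyset^*EG$ is empty, so the corresponding bar construction is the initial $E_\infty$-algebra, which is automatically cofibrant.

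I expect the main technical obstacle to be the Reedy cofibrancy verification, i.e.\ the latching object calculation for the simplicial bar construction, since every downstream statement is a formal consequence of SM7 together with the Quillen property of realization on Reedy diagrams. Conveniently, because $QA$ has been explicitly chosen cofibrant and the unit-insertion maps used to build the degeneracies are all monomorphisms of simplicial sets, this verification reduces to routine iterated applications of the pushout-product axiom and does not require any genuinely new input.
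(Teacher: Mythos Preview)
Your proposal is correct and follows essentially the same route as the paper: both arguments reduce the claim to showing that the map of simplicial bar constructions $B_*(\varphi^*EG,\star\DS G,QA)\to B_*(\psi^*EG,\star\DS G,QA)$ is a Reedy cofibration between Reedy cofibrant objects, then invoke that geometric realization is left Quillen for the Reedy model structure. The paper is slightly more explicit in identifying the latching objects as $\varphi^*EG\times D_nG\otimes QA$ (with $D_nG\subset G^n$ the degenerate simplices) and in writing the relative latching map as the image under $?\otimes QA$ of the pushout-product $(\varphi^*EG\times G^n)\cup_{\varphi^*EG\times D_nG}(\psi^*EG\times D_nG)\to\psi^*EG\times G^n$, but your appeal to SM7 is exactly what makes that map a cofibration. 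Your remark that $\varphi^*EG\to\psi^*EG$ is a projective cofibration of $G$-simplicial sets is correct (the complement is levelwise $G$-free) but, as you implicitly acknowledge, not actually needed for the Reedy argument; and your derivation of cofibrancy of $\int_\varphi A$ by specializing to $\emptyset\hookrightarrow X$ is a clean shortcut compared to the paper's separate treatment.
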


		\begin{proof}
		\begin{enumerate}
		
		\item For $\varphi :X \to BG$ the $n$-th latching object $L_n B_*(\varphi^* EG,\star\DS G,QA)$ of the simplicial bar construction $B_*(\varphi^* EG,\star\DS G,QA)$ is given by
						\begin{align}
						L_n B_*(\varphi^* EG,\star\DS G,QA) =  \varphi^* EG \times D_nG \otimes QA ,
						\end{align} where $D_n G \subset B_n G=G^n$ is the set of degenerate $n$-simplices of $BG$, see \cite[(proof of) Lemma~5.2.1]{riehl}. The inclusion $D_n G \subset B_n G=G^n$ induces the $n$-th latching map
						\begin{align}
							L_n B_*(\varphi^* EG,\star\DS G,QA) =  \varphi^* EG \times D_nG \otimes QA \to B_n(\varphi^* EG,\star\DS G,QA)=\varphi^* EG \times G^n \otimes QA
						\end{align} which is a cofibration since $? \otimes QA$ is left Quillen. Hence, $B_*(\varphi^* EG,\star\DS G,QA)$ is Reedy cofibrant, see also \cite[Remark 5.2.2]{riehl}. Since the geometric realization is a left Quillen functor on the category of simplicial objects equipped with the Reedy model structure \cite[Proposition~VII.3.6]{goerssjardine}, this implies that $\int_{\varphi } A$ is cofibrant.

						\item In order to show that the map $\int_{\varphi } A \to \int_{\psi } A$ is a cofibration, we use again that the geometric realization is left Quillen. Hence, it suffices to show that \begin{align} B_*(\varphi^* EG,\star\DS G,QA) \to B_*(\psi^* EG,\star\DS G,QA)\end{align} is a Reedy cofibration, i.e.\ that the maps \small
									\begin{align}
										B_n(\varphi^* EG,\star\DS G,QA) \cup_{L_n B_*(\varphi^* EG,\star\DS G,QA)} L_n B_*(\psi^* EG,\star\DS G,QA) \to B_n(\psi^* EG,\star \DS G,QA) \label{relativlatchingmapeqn}
										\end{align}\normalsize are cofibrations. 
											First note that
														\begin{align}
													&	B_n(\varphi^* EG,\star \DS G,QA) \cup_{L_n B_*(\varphi^* EG,\star \DS G,QA)} L_n B_*(\psi^* EG,\star \DS G,QA)\\ \cong &  \left((\varphi^* EG \times G^n) \cup _{\varphi^* EG \times D_n G }(\psi^* EG \times D_n G) \right) \otimes QA
														\end{align} since $? \otimes QA$ preserves pushouts. Under this isomorphism the map \eqref{relativlatchingmapeqn} is given by the image of
														\begin{align}
																		(\varphi^* EG \times G^n) \cup _{\varphi^* EG \times D_n G }(\psi^* EG \times D_n G) \to \psi^* EG \times G^n \label{eqnsm7eqn}
																		\end{align} by $? \otimes QA$. Since $? \otimes QA$ preserves cofibrations, it suffices that \eqref{eqnsm7eqn} is a cofibration, which follows from the fact that $\sSet$ is a simplicial model category. \end{enumerate}\end{proof}

		A homotopy $\varphi_0 \simeq \varphi_1 : X \to BG$ induces an isomorphism $\varphi_0^* EG \cong \varphi_1^* EG$ of $G$-bundles only depending on the equivalence class of the homotopy. This implies immediately: 
	
	\begin{lemma}\label{lemmahtpmaps}
		A homotopy $\varphi_0 \simeq \varphi_1 : X \to BG$ induces an isomorphism \begin{align} \int_{\varphi_0} A \cong \int_{\varphi_1 } A\end{align} for any $G$-equivariant $E_\infty$-algebra $A$. This isomorphism only depends on the equivalence class of the homotopy.
	\end{lemma}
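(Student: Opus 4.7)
The plan is to reduce the statement to the corresponding statement about the total spaces $\varphi_0^* EG$ and $\varphi_1^* EG$ of the pulled-back universal bundles, since $\int_\varphi A$ depends on $\varphi$ only through $\varphi^* EG$ as a right $G$-space (via the bar construction in \eqref{defeqnZA}).

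First, I would invoke the classical fact from the homotopy theory of principal bundles: a homotopy $H : X \times \Delta_1 \to BG$ between $\varphi_0$ and $\varphi_1$ gives rise, by pulling back $EG \to BG$ along $H$, to a principal $G$-bundle $H^* EG \to X \times \Delta_1$ whose restrictions to the two ends of the cylinder are canonically identified with $\varphi_0^* EG$ and $\varphi_1^* EG$. Since $\Delta_1$ is contractible and $H^* EG \to X \times \Delta_1$ is a $G$-bundle, a path-lifting argument (or equivalently the homotopy invariance of pullbacks of fibrations over a cylinder) produces a canonical $G$-equivariant isomorphism $\alpha_H : \varphi_0^* EG \xrightarrow{\cong} \varphi_1^* EG$. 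Standard arguments (comparing two homotopies by a homotopy between them, i.e.\ a map $X \times \Delta_2 \to BG$ whose boundary realizes the two homotopies) show that $\alpha_H$ depends only on the equivalence class of $H$ relative to its endpoints.

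Next, I would feed this isomorphism into the definition \eqref{defeqnZA}. Since the bar construction $B_*(\varphi^* EG, \star\DS G, QA)$ is functorial in $\varphi^* EG$ regarded as a functor $\star\DS G^\opp \to \sSet$, and since $\alpha_H$ is $G$-equivariant, it induces a levelwise isomorphism of simplicial $E_\infty$-algebras
\begin{equation}
B_*(\varphi_0^* EG, \star\DS G, QA) \xrightarrow{\cong} B_*(\varphi_1^* EG, \star\DS G, QA).
\end{equation}
Passing to geometric realizations via \eqref{defeqnZA} then yields the desired isomorphism $\int_{\varphi_0} A \cong \int_{\varphi_1} A$ in $\EAlg$. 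Naturality of the bar construction in its first argument ensures that this isomorphism inherits the property of depending only on the equivalence class of the homotopy.

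The only step requiring genuine care is the first one, namely the canonical nature of $\alpha_H$ and its independence of the representative homotopy; everything after that is formal functoriality of the bar construction. Since this fact about pullbacks of principal $G$-bundles along homotopic maps is entirely standard, no substantive obstacle is anticipated.
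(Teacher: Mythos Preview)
Your proposal is correct and follows exactly the route the paper takes: the paper simply notes, in the sentence preceding the lemma, that a homotopy $\varphi_0 \simeq \varphi_1$ induces an isomorphism $\varphi_0^* EG \cong \varphi_1^* EG$ of $G$-bundles depending only on the class of the homotopy, and then declares the lemma to follow ``immediately''. You have merely spelled out the functoriality of the bar construction that the paper leaves implicit.
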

	
	For a space $X$ denote by $BG^X$ the space of maps from $X$ to $BG$ and by $\Pi(X,BG) := \Pi (BG^X)$ the fundamental groupoid of this mapping space. There is a canonical equivalence from $\Pi(X,BG)$ to the groupoid $\PBun_G(X)$ of principal $G$-bundles over $X$
	\cite{heinlothstacks,extofk}. From Lemma~\ref{lemmahtpmaps} we now conclude:
	
	\begin{proposition}\label{eqHHfunctorprop}
		The equivariant higher Hochschild chains with coefficients in a $G$-equivariant $E_\infty$-algebra $A$ provide a functor
		$\PBun_G(X) \to \EAlg$. 
	\end{proposition}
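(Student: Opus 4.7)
The plan is to first reinterpret the statement via the equivalence $\Pi(X,BG) \simeq \PBun_G(X)$, so that it suffices to construct a functor $\Pi(X,BG) \to \EAlg$, and then to transport along this equivalence. Objects of $\Pi(X,BG)$ are maps $\varphi : X \to BG$, while morphisms from $\varphi_0$ to $\varphi_1$ are homotopy classes of paths in $BG^X$ from $\varphi_0$ to $\varphi_1$, i.e.\ homotopy classes (relative to endpoints) of homotopies $\varphi_0 \simeq \varphi_1$.

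On objects I would define the functor by $\varphi \mapsto \int_\varphi A$. On morphisms I would use Lemma~\ref{lemmahtpmaps}: every homotopy class of homotopies $\varphi_0 \simeq \varphi_1$ produces an isomorphism $\int_{\varphi_0} A \cong \int_{\varphi_1} A$ that depends only on the class. This gives a well-defined assignment on morphisms. For functoriality I would verify two things: first, that the constant homotopy at $\varphi$ induces the identity on $\int_\varphi A$, which is immediate because the associated bundle isomorphism $\varphi^* EG \to \varphi^* EG$ is the identity and hence induces the identity after $\stackrel{\mathbb{L}}{\otimes}_G A$; second, that the concatenation of homotopies $\varphi_0 \simeq \varphi_1 \simeq \varphi_2$ gives the composite of the two induced isomorphisms. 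For this I would exploit the underlying fact from Lemma~\ref{lemmahtpmaps} that the construction $\varphi \mapsto \varphi^* EG$ is itself already functorial as an assignment $\Pi(X,BG) \to \PBun_G(X)$, so that concatenation of homotopies corresponds to composition of bundle isomorphisms; applying the functor $(-)\stackrel{\mathbb{L}}{\otimes}_G A : \PBun_G(X) \to \EAlg$ then preserves this composition.

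Finally, to upgrade this to a functor on $\PBun_G(X)$ I would pick an inverse of the canonical equivalence $\Pi(X,BG) \xrightarrow{\simeq} \PBun_G(X)$ and precompose; since equivalences of groupoids are essentially unique, the resulting functor $\PBun_G(X) \to \EAlg$ is well-defined up to canonical natural isomorphism. Alternatively, and more directly, one may observe that the equivalent formulation of $\int_\varphi A$ as $P \stackrel{\mathbb{L}}{\otimes}_G A$ for $P$ the principal $G$-bundle classified by $\varphi$ is manifestly functorial in $P \in \PBun_G(X)$, since the bar construction $B_*(P,\star\DS G,QA)$ is natural in $P$ with respect to $G$-equivariant maps.

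The main obstacle I anticipate is the verification of compatibility with composition of homotopies in sufficient detail; the subtlety is that Lemma~\ref{lemmahtpmaps} only packages the construction at the level of homotopy classes, so one has to appeal either to the functoriality of the pullback construction $\varphi \mapsto \varphi^* EG$ along paths in $BG^X$ (using that $EG \to BG$ is a Kan fibration, which identifies these isomorphisms with path-transport in the associated bundle of fibres) or, equivalently, to the fact that $\varphi \mapsto \varphi^* EG$ realizes a strict model for the canonical equivalence $\Pi(X,BG) \simeq \PBun_G(X)$.
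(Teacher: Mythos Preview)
Your proposal is correct and follows essentially the same approach as the paper: the paper simply states that the proposition follows from Lemma~\ref{lemmahtpmaps} together with the equivalence $\Pi(X,BG)\simeq\PBun_G(X)$, and you have spelled out exactly these steps (object/morphism assignments, functoriality via bundle transport, and transport along the equivalence) in more detail than the paper itself provides. Your alternative direct description via $P\mapsto P\stackrel{\mathbb{L}}{\otimes}_G A$ also matches the paper's remark immediately following the proposition.
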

	
	We will use the notation $\int_P A := \int_{\varphi } A$ for the principal bundle $P:=\varphi^* EG$; equivalently, we can define $\int_P A := P \stackrel{\mathbb{L}}{\otimes}_G A$. 
	
	According to this Proposition, $\int_? A$ is essentially constant on an isomorphism class in $\PBun_G(X)$. In fact, it is even essentially constant on a mapping class group orbit in $\PBun_G(X)$:
	
	\begin{proposition}\label{propmcgorbits}
		Any isomorphism $\Phi : X \to Y$ of spaces induces an isomorphism
		\begin{align} \int_{\Phi^* P} A \cong \int_{P} A\end{align} for any $G$-bundle $P$ over $Y$ and any $G$-equivariant $E_\infty$-algebra $A$. 
	\end{proposition}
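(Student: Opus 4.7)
The plan is to unfold the definition of $\int_P A$ as the geometric realization of the simplicial bar construction $B_*(P,\star\DS G,QA)$ and show that the data of $\Phi$ lift to a level-wise isomorphism of bar constructions that is compatible with all face and degeneracy maps. First I would observe that any isomorphism $\Phi:X\to Y$ of simplicial sets pulls $P\to Y$ back to a principal $G$-bundle $\Phi^*P \to X$, and that by the universal property of the pullback there is a canonical $G$-equivariant isomorphism $\widetilde\Phi : \Phi^* P \xrightarrow{\cong} P$ of total spaces covering $\Phi$. This is the only piece of new data needed.

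Next I would use this $\widetilde\Phi$ to produce, for every simplicial degree $n$, an isomorphism
\begin{align}
B_n(\Phi^* P,\star\DS G,QA) = \Phi^* P \times G^n \otimes QA \xrightarrow{\widetilde\Phi \times \id_{G^n} \otimes \id_{QA}} P \times G^n \otimes QA = B_n(P,\star\DS G,QA).
\end{align}
The face maps of the two-sided bar construction are built either from the right $G$-action on $\Phi^* P$ (resp.\ $P$), from the multiplication in $G$, or from the left $G$-action on $QA$; the degeneracies insert the unit of $G$. Since $\widetilde\Phi$ is strictly $G$-equivariant, each of these structure maps intertwines the isomorphisms above, so we obtain an isomorphism of simplicial objects $B_*(\Phi^* P,\star\DS G,QA) \cong B_*(P,\star\DS G,QA)$ in $\EAlg$.

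Applying the geometric realization $\int^n$, which is functorial, yields the required isomorphism
\begin{align}
\int_{\Phi^* P} A = \Phi^* P \stackrel{\mathbb{L}}{\otimes}_G A \;\cong\; P \stackrel{\mathbb{L}}{\otimes}_G A = \int_P A
\end{align}
in $\EAlg$. I do not anticipate a genuine obstacle here: the essential content is that the bar construction $P \mapsto B_*(P,\star\DS G,QA)$ is functorial in $G$-equivariant isomorphisms of its left argument, which is tautological from the explicit formula in~\eqref{defeqnZA}. The only point worth being careful about is to record that the isomorphism does not depend on any choices beyond $\Phi$ itself, since $\widetilde\Phi$ is determined by the universal property of the pullback; this will be convenient later when one wishes to use the statement as input for naturality under mapping class group actions.
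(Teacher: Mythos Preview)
Your argument is correct and follows essentially the same route as the paper: both rest on the observation that the canonical projection $\Phi^*P\to P$ is a $G$-equivariant isomorphism of right $G$-spaces (covering $\Phi$, hence not a bundle isomorphism over a fixed base), and this alone forces $\Phi^*P\stackrel{\mathbb{L}}{\otimes}_G A \cong P\stackrel{\mathbb{L}}{\otimes}_G A$. The paper states this last implication directly from functoriality of the derived tensor product in its left argument, while you unfold it through the bar construction; the content is the same.
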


\begin{proof}
	Recall that
	\begin{align} \Phi^* P = \{ (x,p) \in X \times P \, |\, p \in P_{\Phi(x)} \} .
	\end{align} Projection to the second factor yields a canonical $G$-equivariant map $\Phi^* P \to P$. By $\Phi^{-1}(p) := (\Phi^{-1}(x),p)$ for $p \in P_x$ and $x\in X$ we obtain its inverse, so $\Phi^* P \cong P$ by a canonical isomorphism of right $G$-spaces (equivalently: a canonical natural isomorphism of functors $\star \DS G^\opp \to \sSet$). This is not an isomorphism of $G$-bundles, but still enough to conclude $\Phi^* P \stackrel{\mathbb{L}}{\otimes}_G A \cong P \stackrel{\mathbb{L}}{\otimes}_G A$. 
	\end{proof}

\begin{remark}[Relation to equivariant factorization homology]
Let $A$ be an  $E_\infty$-algebra with $G$-action.
In Definition~\ref{defeqHH} we have defined equivariant higher Hochschild homology of any space with free right $G$-action (i.e.\ a $G$-bundle) by $X \stackrel{\mathbb{L}}{\otimes}_G A$. In fact, the same formula makes sense also for spaces with arbitrary right $G$-actions, which allows us to extend equivariant higher Hochschild homology to those spaces.
This generalization allows us to briefly comment on the relation of equivariant higher Hochschild homology
to
an equivariant version of factorization homology that was introduced in \cite{EFH} after a first draft of this
paper appeared as a preprint. 

First we summarize the notion of equivariant factorization homology from \cite{EFH}:
Let $M$ be an $n$-dimensional manifold equipped with the action of a finite group 
$G$ and $\rho : G \longrightarrow \End(\R^n)$ a representation of $G$. 
A \emph{$G$-equivariant $\rho$-framing} on $M$ is an isomorphism $TM \cong M\times \R^n$
satisfying $g_*(m,v)=(gm,\rho(g)v)$ for $m\in M$ and $v\in T_m M$, where $g_*$ denotes the pushforward along the
action of $g$. 
Manifolds of dimension $n$
with $G$-equivariant $\rho$-framing
form a symmetric monoidal $(\infty,1)$-category
$G^\rho\mathbf{Orb}^{\text{fr}}_n$
under disjoint union;
we refer to \cite[Section~2]{EFH} for details. 
Let $I$ be a subgroup of $G$ such that the representation $\rho$ restricted to 
$I$ is faithful. We denote by $\mathbb{D}_I$ the space 
$G\times_I\R^n= G\times\R^n /{\sim}$, where $\sim$ identifies  $(g,\rho(i)x)$ with $(g\cdot i,x )$ for $i\in I$, $g\in G$ and $x\in \R^n$.  
This space is equipped with the obvious left $G$-action and $\rho$-framing induced from the standard 
framing on $\R^n$. 
The resulting object in $G^\rho\mathbf{Orb}^{\text{fr}}_n$ is called a \emph{local model}.
We denote by $G^\rho\mathbf{Disk}^{\text{fr}}_n$ the full subcategory
of $G^\rho\mathbf{Orb}^{\text{fr}}_n$ whose objects are disjoint unions of local models 
$\mathbb{D}_I$. A \emph{$G^\rho\mathbf{Disk}^{\text{fr}}_n$-disk algebra} in a symmetric
monoidal $(\infty,1)$-category $\mathcal{C}$ is a symmetric monoidal functor
\begin{align}
O : G^\rho\mathbf{Disk}^{\text{fr}}_n \longrightarrow \mathcal{C} \ \ .
\end{align}  
Equivariant factorization homology of $O$
\begin{align}
\int_- O : G^\rho\mathbf{Orb}^{\text{fr}}_n \to \cat{C}
\end{align}
 is now defined \cite[Definition~4.14]{EFH} via left Kan extension
\begin{equation}
\begin{tikzcd}
 G^\rho\mathbf{Disk}^{\text{fr}}_n \ar{r}{O} \ar{d} & \mathcal{C} \\ 
G^\rho\mathbf{Orb}^{\text{fr}}_n \ar[swap]{ru}{\int_- O } 
\end{tikzcd} \ \ .
\end{equation}

To compare equivariant higher Hochschild homology (which is defined for right actions) to equivariant factorization homology (which is defined for left actions),
we use that a left action on a space $X$ can be turned into a right action by acting with the inverse; we denote $X$ with this right action by $X^\opp$.

If we are given now an $E_\infty$-algebra $A$ with $G$-action,
 we can compute equivariant higher Hochschild homology with coefficients in $A$ for the local models $\mathbb{D}_I$ and compose with the forgetful
functor from $E_\infty$-algebras to chain complexes to get a $G^\rho\mathsf{Disk}^{\text{fr}}_n$-disk algebra $\widehat{A}$ in chain complexes.
This construction gives rise to an $\infty$-functor $\widehat{-}: \EAlg^G  \longrightarrow \catf{Alg}_{G^\rho\mathsf{Disk}^{\text{fr}}_n}(\catf{Ch})$.
It follows from a generalization of Proposition~\ref{satzexcision} to not necessarily free actions
and \cite[Theorem 4.33]{EFH} that equivariant higher Hochschild homology of $A$ and 
equivariant factorization homology of $\widehat{A}$ agree in the sense that
\begin{align}
\int_{\hat{A}} X \cong X^\opp \stackrel{\mathbb{L}}{\otimes}_G A \ \ .
\end{align}
From \cite[Proposition 4.6]{EFH} it follows that we can also compute 
equivariant Hochschild homology via factorization homology~\cite{af} for 
$\mathsf{Disk}_G$-algebras.    

To compute the disk algebra $\widehat{A}$ explicitly,
observe that $\mathbb{D}_I$ is 
homotopy equivalent as a $G$-space to the discrete space $G/I$. We denote by $\iota
: I \longrightarrow G$ the inclusion of $I$ into $G$.
From the homotopy invariance of equivariant Hochschild homology we obtain
\begin{align}
\mathbb{D}_I^\opp\stackrel{\mathbb{L}}{\otimes}_G A \simeq G/I^\opp \stackrel{\mathbb{L}}{\otimes}_G A \simeq B(*/\iota)\stackrel{\mathbb{L}}{\otimes}_G A \simeq \operatorname{hocolim} (I \overset{\iota}{\hookrightarrow}G\rightarrow \EAlg)  \ \ ,
\end{align} 
where the homotopy colimit is computed in $\EAlg$ and the last weak equivalence follows from \cite[Proof of Theorem 8.5.6]{riehl}. This homotopy colimit describes the homotopy
coinvariants $A_I$ of $A$ with respect to the induced action of the subgroup $I$. We conclude that the disk algebra $\widehat A$ that $A$ gives rise to can be explicitly described by 
$\widehat A(\mathbb{D}_I) \simeq A_I$.
\end{remark}

\section{Construction of homotopical equivariant topological field theories}
		In this main section of the article we define the $(\infty ,1)$-category of $G$-cobordisms based on \cite{gtmw,lurietft,CalaqueScheimbauer} allowing us to define homotopical equivariant topological field theories.
		Afterwards, we prove that equivariant higher Hochschild chains (Definition~\ref{defeqHH}) give rise to an example of such a theory. 
		
		\subsection{The $(\infty ,1)$-category of $G$-cobordisms \label{gbordsec}}
		We will model the $(\infty,1)$-category $G\text{-}\Cob(n)$ of $n$-dimensional $G$-cobordisms following \cite{lurietft,CalaqueScheimbauer} using (complete) Segal spaces introduced in \cite{rezk}. 
		
	Let us briefly recall the most important notions: 
		\begin{definition}
			A \textit{Segal space} is a simplicial space $X_\bullet $, i.e. simplicial Kan complex, such that
			\begin{equation}\label{Eq: Segal condition}
			\begin{tikzcd}
			X_{n+m} \ar{r} \ar{d}& X_m \ar{d} \\
			X_n \ar{r} & X_0
			\end{tikzcd}
			\end{equation}
is a homotopy pullback square. The morphisms are induced by the 
morphism 

\begin{align}\begin{array}{rclrcl}
[n]& \to &[n+m], & i &\mapsto &i, \\
\text{$[m]$}& \to &[n+m], & i &\mapsto& i+n \\
\text{$[0]$} & \longrightarrow& [n], & 0 &\longmapsto& n \\ 
\text{ $ [0] $  } & \longrightarrow& [m] ,&   0 &\longmapsto &0 \\
\end{array}
\end{align} 
in the simplex category $\Delta$.
		\end{definition} 	
Note that following \cite{lurietft} we do not require $X$ to Reedy cofibrant as opposed to \cite{rezk}.
		
		Given a Segal space $X_\bullet$ we can build its homotopy category $hX_\bullet$: The objects are the vertices of $X_0$. The set $\Hom_{hX_\bullet}(x,y)$ of morphisms from $x$ to $y$ is given by the set of path connected components of the iterated homotopy fibre product
		\begin{align}
		\{x \} \times^h_{X_0} X_1 \times^h_{X_{0}} \{y\} \ . 
		\end{align}
		The composition of morphisms is induced by
		\begin{align}
		\left( \{x \} \times^h_{X_0} X_1 \times^h_{X_{0}} \{y\} \right) \times \left( \{y \} \times^h_{X_0} X_1 \times^h_{X_{0}} \{z\} \right) & \hookrightarrow  \{x \} \times^h_{X_0} X_1 \times^h_{X_{0}} X_1 \times^h_{X_{0}} \{z\} \\
		&\simeq \{x \} \times^h_{X_0} X_2 \times^h_{X_{0}} \{z\} \\
		&\rightarrow \{x \} \times^h_{X_0} X_1 \times^h_{X_{0}} \{z\} \ .
		\end{align}
		where the appearing weak equivalence is a consequence of \eqref{Eq: Segal condition} and the last morphism is induced by the first face map $\partial_1: X_2\longrightarrow X_1$.
We call an element $f\in X_1$ \textit{invertible} if it is invertible seen as a morphism in $hX_\bullet$. 
Next one defines a space $Z$ of invertible elements together with a map $Z\longrightarrow X_1$\cite[Section 1.4.2]{CalaqueScheimbauer}.
For a Reedy cofibrant Segal space $Z$ is just the subspace of invertible elements.
However, in the general case the definition of the space of invertible elements is more involved, and we refer to
\cite[Section 1.4.2]{CalaqueScheimbauer} for details. 
The degeneracy map $\delta : X_0 \to X_1$ factors through $Z$.
		
		\begin{definition}
			A \textit{complete Segal space} is a Segal space $X_\bullet$ such that $\delta :  X_0 \to Z$ is a homotopy equivalence. 
		\end{definition}
		
		\begin{remark}
			Given an $(\infty,1)$-category described by a complete Segal space $X_\bullet$ we think of the topological space $X_0$ as the $(\infty,0)$-category describing the maximal subgroupoid of $X$. More generally we think of $X_k$ as the $(\infty,0)$-category with objects being collections of $k$ composable morphisms. The completeness condition then implies that the invertible morphisms in $hX$ are exactly the paths in $X_0$ as it should be for this intuition to be consistent.     
		\end{remark}  
		
		To define symmetric monoidal $\iC$-categories, one introduces the category $\Gamma$ with 
		 finite pointed sets $\langle m \rangle= \lbrace *=0,1,\dots , m \rbrace$ 
		 as objects
		 and pointed maps as morphisms.
		For $1\le i\le m$ the \textit{Segal map} $\gamma_i: \langle m \rangle \rightarrow \langle 1 \rangle$
		is defined by
		\begin{align}
	 \langle m \rangle  \ni 	j \longmapsto \delta_{ij} \in \langle 1 \rangle \ .
		\end{align}	
		With this notation we can give the following definition of a symmetric monoidal $\iC$-category
		\cite{Toe, TV15}
		\begin{definition}
			A \textit{symmetric monoidal complete Segal space} is a functor 
			\begin{align}
			A : \Gamma \longrightarrow \text{Complete Segal spaces}
			\end{align}
			such that for ever $m\ge  0$ the map 
			\begin{align}
			\prod_{1\leq i\leq m} A\left(\gamma_i\right): A(\langle m \rangle)\rightarrow A(\langle 1 \rangle)^m
			\end{align}
			is an equivalence of complete Segal spaces.
			Such a functor is called \emph{$\Gamma$-object (in complete Segal spaces)}.
		\end{definition}

\begin{remark}
Form a conceptional point of view, a weaker definition 
in terms of $(\infty,1)$-functors
would be more
appropriate. However, due to the 
strictification result of \cite{TV02} the simpler definition suffices. 
\end{remark}
	
			\begin{definition}
		A \textit{symmetric monidal structure on an $\iC$-category} $X$ is a symmetric monoidal complete Segal space $A$ 
		such that
		 $A(\langle1\rangle)=X$. 
		A \textit{(strict) symmetric monoidal $\iC$-functor} is a natural transformation between the underlying functors 
		\begin{align}
		\Gamma \to \text{Complete Segal spaces}\ . 
		\end{align}
		\end{definition}

		Having recalled these notions we now begin with the definition of the symmetric monoidal $(\infty,1)$-category $G\text{-}\Cob(n)$ of $n$-dimensional $G$-cobordisms:
		The manifolds that this category is built from are equipped with an orientation and a map to the classifying space $BG$ of $G$, where $G$ is always seen as a discrete topological group. We will see this decoration as an $\SO(n)\times G$-structure: For the universal $\SO(n)\times G$-bundle $E(\SO(n)\times G) \to B(\SO(n) \times G)$ consider the associated rank $n$ vector bundle $\xi_G  := E(\SO(n)\times G) \times_{\SO(n)\times G } \mathbb{R}^n$, where we use the group morphism
		\begin{align}
		\SO(n)\times G \xrightarrow{\operatorname{pr}_{\SO(n)}} \SO(n)\longrightarrow \Or(n) \label{Eq: Grouphomo}
		\end{align}
		to define the needed action of $\SO(n)\times G$ on $\mathbb{R}^n$. 
		Now an $\SO(n)\times G$-structure on a manifold $M$ (with boundary) consists of a smooth map $F :  M\to B(\SO(n)\times G)$ 
		together with an isomorphism $ F^* \xi_G  \cong TM$ from  $F^* \xi_G  $ to the tangent bundle of $M$.
		
Since $B(\SO(n)\times G)\cong B\SO(n)\times B G$, we can describe $F$ by its components $f:  M \to B\SO(n)$ and $\psi : M \to BG$;
 and since \eqref{Eq: Grouphomo} factors through $\SO(n)$, an isomorphism $F^* \xi_G \cong TM$ amounts to an isomorphism $f^* \xi \to TM$, where $\xi := E(\SO(n)) \times_{\SO(n)} \mathbb{R}^n$. This is equivalent to a reduction of the frame bundle to $\SO(n)$, i.e.\ an orientation. The map $\psi$ is the classifying map for a principal $G$-bundle. 
In summary, an $\SO(n)\times G$-structure on a manifold $M$ amounts to an orientation of $M$ and a principal $G$-bundle on $M$.

		For the definition of $G\text{-}\Cob(n)$ we recall from \cite[Section 4]{CalaqueScheimbauer} the complete Segal space $\Int_\bullet$ with
	\begin{align}
		\Int_k := &\bigg
		\{ \underline{I}=(\underline{a},\underline{b})= (a_0,\dots ,a_k,b_0, \dots , b_k)\in \R^{2(k+1)}  \\ & \text{ with } a_0\leq a_1 \leq \dots \leq a_k \ , \ b_0\leq b_1 \leq \dots \leq b_k \text{ and } a_j<b_j\bigg\}
	\end{align} for $k \ge 0$. 
		We equip $\Int_k\subset \R^{2(k+1)}$ with the subspace topology. An element in $\Int_k$ can be thought of as $k+1$ ordered intervals $I_0= (a_0,b_0] \leq I_1=[a_1,b_1] \leq \dots \leq I_k= [a_k,b_k)$ closed in $(a_0,b_k)$. 
		A morphism $f :  [m]\longrightarrow [k]$ in the simplex category $\Delta$ acts by the map
		\begin{align}
		\Int_k &\longrightarrow \Int_m \\
		I_0 \leq \dots \leq I_k &\longmapsto I_{f(0)} \leq \dots \leq I_{f(m)} 
		\end{align} and turns $\Int_\bullet$ into a simplicial space which is in fact  a complete Segal space by \cite[Proposition 4.7]{CalaqueScheimbauer}. 

\begin{figure}
\begin{center}
\begin{overpic}[scale=0.45
,tics=10]
{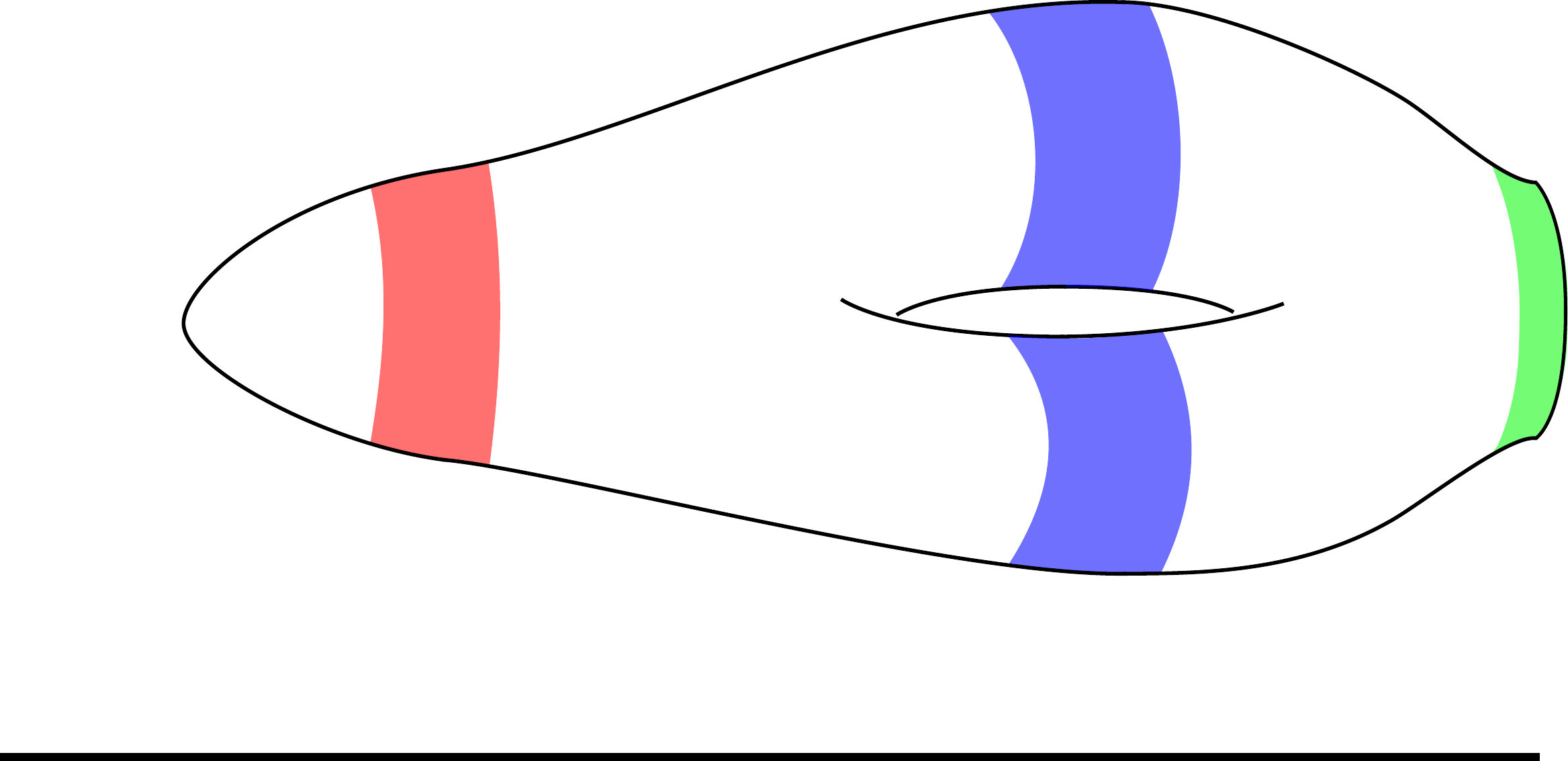}
\put(-0.8,-1){\Large$($}
\put(6,-1){\Large$]$}
\put(22,-1){\color{red}\Large$[$}
\put(30,-1){\color{red}\Large$]$}
\put(65,-1){\color{blue}\Large$[$}
\put(73,-1){\color{blue}\Large$]$}
\put(94,-1){\color{green}\Large$[$}
\put(97,-1){\color{green}\Large$)$}
\put(40,6){\huge$\downarrow$}
\put(43,8){\Large $\pi $}
\put(-0.8,-4.5){\small$a_0$}
\put(6,-4.5){\small$b_0$}
\put(22,-4.5){\color{red}\small$a_1$}
\put(30,-4.5){\color{red}\small$b_1$}
\put(65,-4.5){\color{blue}\small$a_2$}
\put(73,-4.5){\color{blue}\small$b_2$}
\put(92,-4.5){\color{green}\small$a_3$}
\put(97,-4.5){\color{green}\small$b_3$}
\end{overpic}
\vspace{0.5cm}
\caption{An example for the manifold underlying an element of $G\text{-}\PreCob_3^V(n)$.}
\end{center}
\end{figure}
		
		In the next step define the Segal space $G\text{-}\PreCob_\bullet^V(n)$ depending on a finite dimensional real vector space $V$. 
		The space $G\text{-}\PreCob_k^V(n)$ consists of 5-tuples 
		\begin{align}
		(M,f :  M \to B\SO(n),\psi:  M \to BG, \alpha :  f^*\xi \cong TM  ,\underline{I}) \label{eqn5tupel}
		\end{align}
		where $\underline{I}$ is an element of $\Int_k$ and $M$ is a (not necessarily compact) closed
		bounded submanifold of $V\times (a_0,b_k)$ such that
		\begin{itemize}
			\item 
			$\pi :  M \to V\times (a_0,b_k) \to (a_0,b_k)$ is a proper map, 
			
			\item 
$\pi$ is a submersion when restricted to the preimage of the intervals $\underline{I} =(I_0,I_1,\dots, I_k)$.
		\end{itemize} Moreover, $(f,\psi,\alpha)$ is an $\SO(n)\times G$-structure on $M$.
		
The simplicial structure is induced from the structure of $\Int_\bullet$:
A morphism $g:  [m] \to [k]$ in $\Delta$ is sent to the morphism
\small
\begin{align}
G\text{-}\PreCob_k^V(n) & \longrightarrow G\text{-}\PreCob_m^V(n) \\ 
\left(M,f,\psi,\alpha, (a_0, \dots ,b_k)\right) &\mapsto \left(g^*M := \pi^{-1}(a_{g(0)},b_{g(m)}), f|_{g^*M},\psi|_{g^*M}, \alpha|_{g^*M}, (a_{g(0)}, \dots ,b_{g(m)}) \right) \ .\label{Eq: Simplicial structure for GCob}
\end{align} \normalsize
In order to equip $ G\text{-}\PreCob_k^V(n)$ with a topology we construct a Kan complex $G\text{-}\PreCob_k^V(n)_\bullet$ with $G\text{-}\PreCob_k^V(n)$ as 0-simplices. An $\ell$-simplex consists of a 5-tuple (compare to \cite[Definition 9.10]{CalaqueScheimbauer})

\begin{align}
(M,f :  M \to B\SO(n),\psi:  M \to BG, \alpha  ,(a_0(s),\dots , b_k(s))_{s\in |\Delta_\ell| }) \ , \label{anellsimplex}
\end{align}  where $|\Delta_\ell|= \{ (x_0,\dots , x_\ell)\in \R^{l+1} \mid \sum_{i=0}^\ell x_i =1 \}$ is the geometric standard simplex,
such that
\begin{itemize}
\item 
$(a_0(s)\dots , b_k(s))$ is an element of $\Int_k$ for all $s\in |\Delta_\ell|$ and
\begin{align}
|\Delta_\ell| \longrightarrow \R^{2(k+1)} , \quad  s \mapsto (a_0(s), \dots b_k(s))
\end{align}
is a smooth map,

\item $M$ is a $n+\ell$-dimensional submanifold of \begin{align} V\times (a_0(\cdot ),b_k(\cdot )) \times |\Delta_\ell|:=\lbrace (v,\lambda,s)\in V\times \R \times |\Delta_\ell| \, |\,  a_0(s) < \lambda < b_k(s) \rbrace \, \end{align}

\item 
the map $\pi : M \to V\times (a_0(\cdot ),b_k(\cdot )) \times |\Delta_\ell| \to (a_0(\cdot ),b_k(\cdot )) \times |\Delta_\ell|$ is proper,

\item 
the map $\pi_\ell    :     M \to V\times (a_0(\cdot ),b_k(\cdot )) \times |\Delta_\ell| \to  |\Delta_\ell|$ is a submersion,

\item 
$\alpha    :     \ker (D\pi_\ell    :     TM \rightarrow T|\Delta_\ell|) \rightarrow f^*\xi$ is a fibre-wise linear isomorphism and

\item
$\pi$ is a submersion when restricted to the preimage $\pi^{-1}((a_i(\cdot),b_i(\cdot))\times |\Delta_\ell|)$ for all $i\in \{ 0,\dots , k \}$.

\end{itemize}
One can show that $M \to|\Delta_\ell|$ is a fibre bundle~\cite[Remark 5.9]{CalaqueScheimbauer}. 
For a morphism $g    :     [m]\to [\ell]\in \Delta$ we denote by $|g|   :     |\Delta_m|\to |\Delta_\ell|$ the induced map on standard simplices. The corresponding map
\begin{align}
g^\Delta : G\text{-}\PreCob_k^V(n)_\ell \longrightarrow G\text{-}\PreCob_k^V(n)_m
\end{align}
sends $(a_0(s)\dots b_k(s))_{s\in |\Delta_\ell|}$ to $(a_0(|g|(s))\dots b_k(|g|(s)))_{s\in |\Delta_m|}$, $M$ to $|g|^*M$ and the additional structure also to its pullback along $|g|$.
		
In order to make the resulting simplicial space independent of $V$, we pick an infinite dimensional vector space $\R^\infty$ and define $G\text{-}\PreCob(n)$ as the homotopy colimit 
\begin{align}
G\text{-}\PreCob(n)= \underset{V\subset \R^\infty }{\operatorname{hocolim}}\, G\text{-}\PreCob^V(n) 
\end{align}	
running over all finite-dimensional subspaces of $\mathbb{R}^\infty$. 
This is a Segal space by \cite[Section 9]{CalaqueScheimbauer}, however it is not complete in general. Therefore, one is led to the following definition:
		
\begin{definition}
For any group $G$ we define the \emph{$(\infty,1)$-category of $n$-dimensional $G$-cobordisms $G\text{-}\Cob(n)$} as the complete Segal space obtained by completion of the Segal space $G\text{-}\PreCob(n)$. 
\end{definition}

To define the symmetric monoidal structure on $G\text{-}\Cob(n)$ it is enough to define a $\Gamma$-object 
in Segal spaces $A$ with $A(\langle 1 \rangle)=G\text{-}\PreCob(n)$, since
the image of a Segal space under the completion map is weakly equivalent to the original space and finite products are preserved up to weak equivalences.
For a finite dimensional vector space $V$ and $m \geq 0$ let $G\text{-}\PreCob^V(n)[m]$ be the Segal space which in level $k$ is given by all
\begin{align}\label{Eq: Element Gamma Space}
\left( M_1,\dots ,M_m; f_1,\dots ,f_m; \psi_1,\dots ,\psi_m; \varphi_1,\dots ,\varphi_m; (a_0, \dots b_k) \right)
\end{align}
such that $\left( M_i, f_i, \psi_i, \varphi_i, (a_0, \dots b_k) \right)$ is in  $G\text{-}\PreCob^V_k(n)$ for all $1 \leq i \leq m$ and the submanifolds $M_1,\dots M_m \subset V\times (a_0,b_k)$ are pairwise disjoint (compare with \cite[Section~7.1]{CalaqueScheimbauer}). 
The simplicial structure and topology can be defined as for $G\text{-}\PreCob^V(n)$. There is an induced symmetric monoidal structure on $G\text{-}\PreCob(n)$ by taking the homotopy colimit over subspaces of an infinite-dimensional vector space $\R^\infty$.   
   
\subsection{Homotopical equivariant topological field theories}
		We can now define a homotopical version of the equivariant topological field theories in \cite{turaevhqft}: 
\begin{definition}\label{defhomotopicalequivtft}
For any group $G$ an \emph{$n$-dimensional homotopical equivariant topological field theory with values in a symmetric monoidal $(\infty,1)$-category $\cat{S}$} is a symmetric monoidal $\iC$-functor 
\begin{align}
Z: G\text{-}\Cob(n) \to \cat{S}
\end{align}
We will refer to $\cat{S}$ as the \emph{target (category)} of the theory.
\end{definition}

\begin{remark}
In \cite{turaevhqft} equivariant topological field theories are not only symmetric monoidal functors on the category of $G$-cobordisms, but additionally satisfy the axiom of homotopy invariance, i.e.\ bordisms equipped with different maps to $BG$ gives rise to the same linear map if the maps to $BG$ are homotopic relative boundary. 
In the homotopical framework a higher version of homotopy invariance is implemented automatically because homotopies of classifying maps give rise to higher invertible morphisms in the $G$-cobordism category. Hence, passing to homotopy categories, a homotopical equivariant topological field theory induces an equivariant topological field theory. 
\end{remark}

\begin{remark}\label{remprecob}
	When considering homotopical equivariant topological field theories in their entirety we will be interested in the derived mapping space from $G\text{-}\Cob(n)$ to $\cat{S}$. Since derived mapping spaces are homotopy invariant, we can replace $G\text{-}\Cob(n)$ by any convenient equivalent model.
	In particular, since by definition $G\text{-}\Cob(n)$ is weakly equivalent to $G\text{-}\PreCob(n)$, we can describe maps from $G\text{-}\Cob(n)$ to a complete Segal space by elements in the derived mapping space from $G\text{-}\PreCob(n)$ to this complete Segal space, see \cite[2.2.8]{lurietft}. Hence, we can and will work with the uncompleted version $G\text{-}\PreCob(n)$ of the $(\infty,1)$-category of $G$-cobordisms. 
\end{remark}

			\begin{example}[Example for a target category: Cospans in $E_\infty$-algebras]\label{excospans}
For the main constructions of the present article we will be interested in the following target: 
Given an $(\infty,1)$-category $\cat{C}$ with finite colimits, we obtain by dualizing the results of \cite{haugseng} an $(\infty,1)$-category $\Cospan (\cat{C})$ of cospans in $\cat{C}$. When presented as a Segal space this $(\infty,1)$-category can be explicitly described in degree $k\ge 0$ as follows:
First denote by $\Sigma^k$ the partially ordered set of pairs $(i,j)$ such that $0\le i\le j\le k$, where $(i,j) \le (i',j')$ if $i\le i'$ and $j'\le j$ \cite[Definition~5.1]{haugseng}. In fact, $\Sigma^\bullet$ yields a cosimplicial object in categories. 
Now $\Cospan (\cat{C})_k$ is given by the maximal $\infty$-groupoid of the space of co-Cartesian functors from the category $\Sigma^k$ to $\cat{C}$. For a definition of co-Cartesian functors we refer to (the dual of) \cite[Definition~5.5]{haugseng} and the illustrations below. Informally speaking, the squares formed by a co-Cartesian functor $\Sigma^k\to \cat{C}$ are all homotopy pushouts.

By \cite[Corollary~8.5]{haugseng}, $\Cospan (\cat{C})$ is a complete Segal space, the simplical structure is induced by the cosimplical structure of $\Sigma^\bullet$. Using the coproduct of $\cat{C}$ we can turn $\Cospan (\cat{C})$ into a symmetric monoidal $(\infty,1)$-category. This is dual to \cite[Corollary~12.1]{haugseng}. Below we will use as target category $\Cospan (\cat{C})$ for $\cat{C}$ equal to the $(\infty,1)$-category of $E_\infty$-algebras in chain complexes. 

Concretely, an $\ell$-simplex of $\Cospan (\EAlg)_k$ is a homotopy coherent diagram $\mathfrak{C}(\Sigma^k \times [\ell])\longrightarrow \EAlg$ 
\begin{itemize}
\item which is co-Cartesian in $\Sigma^k$ 
\item and sends all morphisms in $[\ell]=0\to 1 \to \dots \to \ell$ to weak equivalences.
\end{itemize}
It is instructive to illustrate the definition in low degrees:
\begin{itemize}
\item 
A 0-simplex of $\Cospan (\EAlg)_0$ is an $E_\infty$-algebra $E_{0,0}$.

\item 
A 0-simplex of $\Cospan (\EAlg)_1$ is a diagram in $\EAlg$:
\begin{equation}
\begin{tikzcd}[column sep=small]
 &  E_{0,1} &  \\
 E_{0,0} \ar{ru} &  & E_{1,1} \ar{lu}
\end{tikzcd} \ .
\end{equation}

\item 
A 1-simplex of $\Cospan (\EAlg)_1$ is a homotopy coherent diagram in $\EAlg$:
\begin{equation}
\begin{tikzcd}[row sep=scriptsize]
 &  &  E_{0,1}' &  \\
 &  E_{0,1} \ar{ru}{\simeq} &   & \\ 
  & E_{0,0}' \ar{ruu} &  & E_{1,1}' \ar{luu}\\
 E_{0,0} \ar{ruu} \ar[swap]{ru}{\simeq} &  & E_{1,1} \ar[crossing over]{luu}  \ar{ru}{\simeq}& 
\end{tikzcd}     \ .
\end{equation}

\item 
A 0-simplex of $\Cospan (\EAlg)_2$ is a co-Cartesian homotopy coherent diagram in $\EAlg$:
\begin{equation}
\begin{tikzcd}[column sep=small]
 &   & E_{0,2}  &  &  \\
 &  E_{0,1} \ar{ru} & &  \ar{lu} E_{1,2} &  \\
 E_{0,0} \ar{ru} &  & E_{1,1} \ar{lu} \ar{ru} &  & E_{2,2} \ar{lu}
\end{tikzcd} \ .
\end{equation}
Being co-Cartesian means that the square
\begin{equation}
\begin{tikzcd}[column sep=small]
 &   & E_{0,2}  &  &  \\
 &  E_{0,1} \ar{ru} & &  \ar{lu} E_{1,2} &  \\
  &  & E_{1,1} \ar{lu} \ar{ru} &  
\end{tikzcd}
\end{equation}
is a homotopy pushout. 
\end{itemize}
We should comment on the relation to other common targets used in the literature, namely (higher) Morita categories of $E_n$-algebras: Their objects are $E_n$-algebras, their 1-morphisms are bimodules in $E_{n-1}$-algebras etc. 	
When we observe that for $n=\infty$ an $E_{n-1}$-algebra is again an $E_\infty$-algebra and that a cospan 
\begin{equation}
\begin{tikzcd}[column sep=small]
&  E_{0,1} &  \\
E_{0,0} \ar{ru} &  & E_{1,1} \ar{lu}
\end{tikzcd}
\end{equation}
of $E_\infty$-algebras
makes $E_{0,1}$ an $(E_{0,0} ,   E_{1,1})$-bimodule, we see that the idea behind the cospan category is not substantially different. In particular, we can see any (equivariant) topological field theory with values in cospans of $E_\infty$-algebras as a theory taking values in the Morita category of $E_\infty$-algebras. 
\end{example}

\subsection{Main Theorem\label{mainthm}}
Finally, we state our main result that equivariant higher Hochschild homology gives rise to a homotopical equivariant topological field theory.

\begin{theorem}\label{thmmain}
For a group $G$, let $A$ be a $G$-equivariant $E_\infty$-algebra in chain complexes over a field. 
Then $G$-equivariant higher Hochschild chains
with coefficients $A$
 naturally extend, for any $n\ge 1$, to an $n$-dimensional homotopical equivariant topological field theory
\begin{align} Z_A = \int_? A : G\text{-}\Cob(n) \to \Cospan (\EAlg)  \label{thehhqft}
\end{align} with values in cospans of $E_\infty$-algebras. 
\end{theorem}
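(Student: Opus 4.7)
The plan is to construct the functor $Z_A$ by working with the uncompleted Segal model $G\text{-}\PreCob(n)$ (justified by Remark~\ref{remprecob}) and defining, for each $k$-simplex of $G\text{-}\PreCob_k^V(n)$, a co-Cartesian diagram $\Sigma^k\to \EAlg$ that lives in $\Cospan(\EAlg)_k$. Concretely, given a $k$-simplex encoded by $(M,f,\psi,\alpha,(a_0,\dots,b_k))$ together with the intervals $I_0\le I_1\le\dots\le I_k$, I would assign to the object $(i,j)\in \Sigma^k$ the submanifold $M_{i,j} := \pi^{-1}([a_i,b_j])$ together with the $G$-bundle classified by the restriction $\psi|_{M_{i,j}}$, and then apply equivariant higher Hochschild chains to obtain $\int_{\psi|_{M_{i,j}}}A\in\EAlg$. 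The inclusions $M_{i,j}\hookrightarrow M_{i',j'}$ whenever $(i,j)\le (i',j')$ in $\Sigma^k$ induce morphisms of the associated $G$-bundles and hence morphisms of $E_\infty$-algebras, producing the desired functor $\Sigma^k\to \EAlg$.

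To handle families smoothly and to get a map of Segal spaces (rather than just a level-wise assignment), I would use Lurie's $(\infty,1)$-Grothendieck construction: the Segal space $G\text{-}\PreCob^V(n)$ naturally carries a tautological functor into principal $G$-bundles over the family of restricted manifolds $M_{i,j}$. Composing this with the $\infty$-functor $\PBun_G(-)\to \EAlg$, $P\mapsto \int_P A$, from Proposition~\ref{eqHHfunctorprop} (and its straightforward extension to families), and using the Grothendieck construction to straighten the resulting fibration of diagrams parametrised by $\Sigma^k$-shaped decompositions, yields the required coherent assignment. The horizontal functoriality in the simplicial variable $k$ is built into the cosimplicial structure of $\Sigma^\bullet$ and is compatible with the face/degeneracy maps induced from $\Int_\bullet$ because both act by restriction along subintervals.

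The crucial point is to verify that the diagrams so produced are co-Cartesian, i.e.\ that every subsquare
\begin{equation*}
\begin{tikzcd}[column sep=small]
 & \int_{\psi|_{M_{i,j'}}} A & \\
 \int_{\psi|_{M_{i,j}}} A \ar{ru} & & \int_{\psi|_{M_{i',j'}}} A \ar{lu} \\
 & \int_{\psi|_{M_{i',j}}} A \ar{lu} \ar{ru} &
\end{tikzcd}
\end{equation*}
is a homotopy pushout in $\EAlg$. This is exactly equivariant excision (Proposition~\ref{satzexcision}) applied to the decomposition $M_{i,j'}=M_{i,j}\cup_{M_{i',j}} M_{i',j'}$ together with the compatible gluing of the maps to $BG$, which is available because the intervals overlap along the collar $[a_{i'},b_j]$ and the submersion hypothesis ensures the decomposition is a homotopy pushout of spaces over $BG$. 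Lemma~\ref{lemmacofbetcofobj} guarantees that the constituent maps are cofibrations between cofibrant objects, so the strict pushout computes the homotopy pushout.

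For the symmetric monoidal structure, I would construct the $\Gamma$-object lift by applying the same recipe to the Segal spaces $G\text{-}\PreCob^V(n)[m]$: the disjoint union of manifolds $M_1\sqcup\dots\sqcup M_m$ gives rise to the coproduct $\int_{\psi_1}A\otimes\dots\otimes\int_{\psi_m}A$ in $\EAlg$, and the Segal condition for $\Cospan(\EAlg)$ follows because equivariant higher Hochschild chains turn disjoint unions into coproducts (which are tensor products in $\EAlg$). The main obstacle is genuinely homotopy-coherent: turning the above pointwise assignment into a map of complete Segal spaces requires handling families of bordisms parametrised by simplices $|\Delta_\ell|$ together with families of $G$-bundles and producing a strictly simplicial map to $\Cospan(\EAlg)_\bullet$. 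This is where the Grothendieck construction does the heavy lifting, replacing the pseudo-functoriality of $\int_{(-)}A$ on the groupoid of $G$-bundles by an honest $\infty$-functor whose unstraightening is easy to map out of $G\text{-}\PreCob(n)$; once this is in place, checking the Segal and co-Cartesian conditions reduces to the excision argument above and the co-Cartesianness is preserved under pullback along the smooth parameters, completing the construction.
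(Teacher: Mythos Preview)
Your overall strategy matches the paper's: work with $G\text{-}\PreCob(n)$, assign to $(i,j)\in\Sigma^k$ the restricted manifold $M_{ij}$ with its map to $BG$, apply $\int_{-}A$, invoke excision (Proposition~\ref{satzexcision}) for co-Cartesianness, and handle the monoidal structure via disjoint unions. That part is fine.

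The gap is in the passage you flag as ``the main obstacle''. You correctly identify that one must produce a \emph{strictly} simplicial map in the $\ell$-direction, and you assert that the Grothendieck construction ``does the heavy lifting''. But straightening is only compatible with pullback along maps $[m]\to[\ell]$ \emph{up to natural weak equivalence}, not up to isomorphism. So applying $\St_{[\ell]}$ to the fibration $\widetilde M_{ij}\to\Delta_\ell$ and then $\int_{-}A$ yields a family of diagrams that is simplicial in $\ell$ only up to coherent homotopy; this is not yet a map of simplicial spaces into $\Cospan(\EAlg)$. Your proposal does not say how to rectify this. The paper resolves it by replacing the source with its monadic bar resolution $\Bar\,G\text{-}\PreCob^V(n)$ (a weakly equivalent model), on which the coherence homotopies witnessing $\phi_m\Delta_f^*\sigma\simeq f^*\phi_n\sigma$ can be organised into an honest simplicial map using the extra simplicial coordinates of the bar construction. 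Without some such rectification step your construction does not define a map of Segal spaces, and the appeal to ``the Grothendieck construction'' alone is not enough to close this.
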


The proof of this Theorem will occupy the rest of this section. 
In order to define \eqref{thehhqft} we use Remark~\ref{remprecob} and give $Z_A$ as a map $G\text{-}\PreCob(n) \to \Cospan (\EAlg)$, i.e.\ we need to exhibit for each fixed $k\ge 1$ a simplicial map
\begin{align}
G\text{-}\PreCob_k(n)  \to \Cospan (\EAlg)_k\ .\label{eqndeftft}
\end{align} 
It sufices to give a simplicial map 
\begin{align}
G\text{-}\PreCob_k^V(n)  \to \Cospan (\EAlg)_k \label{eqndeftftV}
\end{align} 
for each finite-dimensional real vector space $V$, which is natural in $V$.
Before giving \eqref{eqndeftftV} for general $\ell$-simplices we will discuss its definition on 0-simplices (using the notation in \eqref{eqn5tupel}): 
For a 5-tuple
\begin{align}
(M,f :  M \to B\SO(n),\psi:  M \to BG, \alpha :  f^* \xi \cong TM  ,\underline{I}) 
\end{align}
we built a functor $\Sigma^k \to \Top / BG$ sending $(i,j)$ with $1\le i\le j\le k$ to $M_{ij}$, which we define as the preimage of the closed interval in $(a_0,b_k)$ going from $a_i$ to $b_j$ under $\pi :   M \to V\times (a_0,b_k) \to (a_0,b_k)$,  equipped with the appropriate restriction $\psi_{ij} : M_{ij} \to BG$ of $\psi$. 
Hence, we assign to $(i,j)$ a certain submanifold of $M$ and restrict the map to $BG$ to it. The obvious submanifold inclusions and restrictions of maps to $BG$ yield the definition of the functor $\Sigma^k \to \Top / BG$ on morphisms. This functor is co-Cartesian, and so is the functor $\Sigma^k \to \sSet / BG$ obtained by taking singular simplices on the topological spaces involved. 
Taking $G$-equivariant higher Hochschild homology yields a functor $\Sigma^k \to \EAlg$. For this to be a 0-simplex in $\Cospan (\EAlg)_k$, we need this functor to be co-Cartesian (according to Example~\ref{excospans}). This is indeed the case 
due to the excision property for equivariant higher Hochschild homology formulated below.
Hence, we have constructed the map \eqref{eqndeftftV} on zero simplices.
After formulating and proving the excision property below (Proposition~\ref{satzexcision}) we proceed with the definition of \eqref{eqndeftftV} on higher simplices.

\subparagraph{Excision for equivariant higher Hochschild homology.}
The main computational tool for ordinary (i.e.\ non-equivariant) Hochschild homology is excision \cite[Theorem~1~(3)]{gtz}, i.e.\ the fact that derived Hochschild chains preserve homotopy pushouts. The following is an equivariant generalization taking into account not only the gluing of spaces, but also of maps to $BG$. The statement will be already adapted to the case of interest, namely the gluing of bordisms.

	\begin{proposition}[Excision]\label{satzexcision}
			Let $M: \Sigma_0 \to \Sigma_2$ be an oriented $n$-dimensional bordism (embedded in some Euclidean space) that is cut at a hypersurface $\Sigma_1$ giving  bordisms $M_1 : \Sigma_0 \to \Sigma_1$ and $M_2 : \Sigma_1 \to \Sigma_2$. Then for any map $\psi : M \to BG$ 
			the square
			\begin{equation}
			\begin{tikzcd}
		\int_ {	\psi|_{\Sigma_1}  }    A \ar{rr}{} \ar[swap]{dd}{} & &  \int_{\psi|_{M_1} } A       \ar{dd}{} \\
			& & \\
		\int_	{\psi|_{M_2} } A  \ar{rr}{} & &  \int_{\psi} A     
			\end{tikzcd}\label{eqnexcision}
			\end{equation} is a homotopy pushout of $E_\infty$-algebras. 
			\label{hochschildexcision}
		\end{proposition}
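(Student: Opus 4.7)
The plan is to reduce Proposition~\ref{satzexcision} to two intermediate facts. First, the commutative square with corners $\psi|_{\Sigma_1}^* EG$, $\psi|_{M_1}^* EG$, $\psi|_{M_2}^* EG$, $\psi^* EG$ is a homotopy pushout of right $G$-spaces. Second, the functor $P \mapsto P \stackrel{\mathbb{L}}{\otimes}_G A$ sends homotopy pushouts of right $G$-spaces to homotopy pushouts in $\EAlg$. Combined with the equivalent formulation $\int_\varphi A \simeq \varphi^* EG \stackrel{\mathbb{L}}{\otimes}_G A$ from Definition~\ref{defeqHH}, these together imply that \eqref{eqnexcision} is a homotopy pushout of $E_\infty$-algebras.

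For the first fact, I would start by choosing a collar of $\Sigma_1$ in each $M_i$ and passing to singular simplicial sets, so that both inclusions $\Sigma_1 \hookrightarrow M_i$ become cofibrations in $\sSet$ and $M \cong M_1 \cup_{\Sigma_1} M_2$ is a homotopy pushout. Pulling back $\psi^* EG \to M$ along these inclusions is computed levelwise in $\Set$ as a fibre product over $BG$, and fibre products commute with pushouts along injections of sets. The resulting square of pullback bundles is therefore itself a pushout in $\sSet$ compatible with the evident right $G$-actions, hence a pushout of right $G$-spaces; the two left-hand maps are levelwise injections, so in particular cofibrations. This establishes the desired homotopy pushout.

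For the second fact, I would use the explicit bar model $P \stackrel{\mathbb{L}}{\otimes}_G A = |B_*(P, \star \DS G, QA)|$ with $B_n = P \times G^n \otimes QA$. Both $- \times G^n$ and $- \otimes QA$ are left adjoints and hence preserve pushouts, so applying $B_*$ levelwise to the pushout of right $G$-spaces above yields a levelwise pushout of simplicial $E_\infty$-algebras. By (the argument of) Lemma~\ref{lemmacofbetcofobj}, each of these simplicial objects is Reedy cofibrant and the maps between them are Reedy cofibrations. Since the geometric realization is left Quillen for the Reedy model structure on simplicial $E_\infty$-algebras, it sends this pushout to a homotopy pushout in $\EAlg$, completing the argument.

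The main obstacle will be the model-theoretic bookkeeping: at each step one must verify that the concrete pushout under consideration coincides with its homotopy pushout, i.e.\ that the relevant maps are genuine cofibrations in the appropriate model category (both for right $G$-spaces in $\sSet$ and for the Reedy structure on the simplicial bar constructions). Once this is pinned down, the argument reduces to the compatibility of pushouts with base change and with the left-adjoint operations occurring in the bar construction.
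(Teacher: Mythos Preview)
Your proposal is correct and follows essentially the same route as the paper: both arguments show that the strict pushout of the underlying data in $\sSet/BG$ (equivalently, of the pulled-back bundles as right $G$-spaces) is preserved by the constituent functors of the bar-construction formula \eqref{defeqnZA}, and then invoke Lemma~\ref{lemmacofbetcofobj} to conclude that the resulting strict pushout of $E_\infty$-algebras is a homotopy pushout. The paper packages this as a single application of \cite[Proposition~A.2.4.4]{htt} to the span with cofibrant legs, whereas you unpack the Reedy-cofibrancy step and the left-Quillen property of realization explicitly; these are two phrasings of the same verification.
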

		\begin{proof} Note that
			\begin{equation}
			\begin{tikzcd}
			\psi|_{\Sigma_1} : \Sigma_1 \to BG \ar{rr}{} \ar[swap]{dd}{} & & \psi|_{M_1}  : M_1   \to BG \ar{dd}{} \\
			& & \\
			\psi_{M_2}  : M_2   \to BG  \ar{rr}{} & & \psi :M \to BG
			\end{tikzcd} \label{eqnpushoutproofex1}
			\end{equation} 
			is a pushout in $\sSet / BG$. 
			The functors $?^* EG : \sSet / BG\to \sSet$, $? \times X : \sSet \to \sSet$
			 and $? \otimes QA : \sSet \to \EAlg$ preserve pushouts (for $?\times X$ this is clear, for $?^* EG$ it follows from \cite{barmeierschweigert}, see also \cite[Example~2.10]{extofk}), and so do the coends.
			 Therefore, we can read off from \eqref{defeqnZA} that $\int_? A$ sends the pushout \eqref{eqnpushoutproofex1} to a pushout. The span underlying \eqref{eqnpushoutproofex1} has cofibrant vertices and cofibrant legs, so by Lemma~\ref{lemmacofbetcofobj} the same is true for the span underlying \eqref{eqnexcision}. Now we can invoke \cite[Proposition~A.2.4.4]{htt} to deduce that \eqref{eqnexcision} is a homotopy pushout.\end{proof}

\subparagraph{Grothendieck construction.}
In order to define \eqref{eqndeftftV} on $\ell$-simplices we need to assign to an $\ell$-simplex of $G\text{-}\PreCob_k^V(n)$ a homotopy coherent diagram from  $[\ell]=0\to 1 \to \dots \to \ell$ 
to the maximal subgroupoid of co-Cartesian diagrams $\Sigma^k \to \EAlg$, i.e.\ an $\ell$-simplex in $\Cospan (\EAlg)_k$. 
To this end we use the $(\infty,1)$-version of the \emph{Grothendieck construction}, also called \emph{(un)straightening} in \cite[Section~2.2.1 \& 3.2]{htt}. 

More precisely, we will use the version of the (un)straightening constructed in \cite[Section~3.2.4]{htt} that we will now briefly recall: For a small category $\cat{C}$
there is the \emph{straightening functor}
\begin{align} \St_\cat{C} : \sSet / N\cat{C} \to \sSet^\cat{C} \label{straighteqn}  \end{align} from simplicial sets over the nerve of $\cat{C}$ to $\cat{C}$-shaped diagrams in simplicial sets (hence, this construction produces in fact strict diagrams, not only homotopy coherent ones). It is left adjoint to the \emph{unstraightening functor} 
\begin{align}
 \Un_\cat{C} : \sSet^\cat{C}   \to \sSet / N\cat{C}. 
\end{align}
By \cite[Remark~3.2.5.7]{htt} for any functor $f: \cat{C} \to \cat{D}$ between categories the square
	\begin{equation}
\begin{tikzcd}
 \sSet^\cat{D} \ar{rr}{ \Un_\cat{D}} \ar[swap]{dd}{f^*} & & \sSet / N\cat{D} \ar{dd}{Nf^*} \ar[Rightarrow, shorten >= 10, shorten <= 10]{ddll}{\cong} \\
& & \\
\sSet^\cat{C} \ar{rr}{ \Un_\cat{C}} & & \sSet / N\cat{C}
\end{tikzcd} 
\end{equation} commutes up to canonical natural isomorphism, i.e.\
the unstraightening is compatible with pullbacks.
Here $f^* : \sSet^\cat{D} \to \sSet^\cat{C}$ is given by precomposition with $f$ and $Nf^* : \sSet / N\cat{D} \to \sSet / N\cat{C}$
is given by pullback along $Nf : N \cat{C}\to N\cat{D}$. Since for every category $\cat{C}$ the adjunction $\St_\cat{C} \dashv  \Un_\cat{C}$ 
is a Quillen equivalence when $\sSet / N\cat{C}$ is equipped with the covariant model structure 
and $\sSet^\cat{C}$ with projective model structure \cite[Proposition~3.2.5.18]{htt}, 
we can deduce that the square
	\begin{equation}
\begin{tikzcd}
\sSet / N\cat{D} \ar{rr}{Nf^* } \ar[swap]{dd}{\St_\cat{C}} & & \sSet / N\cat{C}  \ar{dd}{\St_\cat{D}} \ar[Rightarrow, shorten >= 10, shorten <= 10]{ddll}{\simeq} \\
& & \\
\sSet^\cat{D} \ar{rr}{f^*} & & \sSet^\cat{C}
\end{tikzcd} 
\end{equation} 
commutes up to canonical natural weak equivalence, i.e.\ straightening is compatible with pullbacks (in a slightly weaker sense than the unstraightening).

After these preparations we can start to define \eqref{eqndeftftV} on $\ell$-simplices: 
Let 
\begin{align}
		(M,f :  M \to B\SO(n),\psi:  M \to BG, \alpha  ,(a_0(s),\dots , b_k(s))_{s\in |\Delta_\ell| }) \ , \label{anellsimplex2}
\end{align} be an $\ell$-simplex of $G\text{-}\PreCob_k^V(n)$, where we use the notation from \eqref{anellsimplex}.
For fixed $(i,j) \in \Sigma^k$ we have a morphism $[1] \to [k]$ sending $0$ to $i$ and $1$ to $j$. 
This morphism gives rise to a map sending \eqref{anellsimplex2} to an $\ell$-simplex 
\begin{align}
		(M_{ij} ,f_{ij}  :  M \to B\SO(n),\psi_{ij}:  M_{ij} \to BG, \alpha _{ij} ,(a_i(s), b_i(s), a_j(s), b_j(s))_{s\in |\Delta_\ell| }) \ , 
\end{align}
in $G\text{-}\PreCob_1^V(n)$. 

Denote by $\pi_\ell^{ij} : M_{ij} \to |\Delta_\ell|$ the corresponding fibre bundle. The image $\Sing \pi_\ell^{ij} : \Sing M_{ij} \to \Sing |\Delta_\ell|$ under the functor $\Sing : \Top \to \sSet$ taking singular simplices is a fibration 
and so is the pullback
	\begin{equation}
			\begin{tikzcd}
		\widetilde M_{ij}  \ar{rr}{} \ar[swap]{dd}{\widetilde \pi_\ell^{ij}} & & \Sing M_{ij} \ar{dd}{\Sing \pi_\ell^{ij}} \\
			& & \\
			\Delta_\ell  \ar{rr}{} & & \Sing |\Delta_\ell|
			\end{tikzcd} \label{pullbackalonguniteqn}
			\end{equation}
			of $\Sing \pi_\ell^{ij}$ along the unit $\Delta_\ell  \to \Sing |\Delta_\ell|$ of the adjunction between geometric realization $|?| : \sSet \to \Top$ and singular simplices $\Sing : \Top \to \sSet$. Note that this pullback is also a homotopy pullback. 
		Using the straightening \eqref{straighteqn}, $\widetilde \pi_\ell^{ij}$ corresponds to a functor $F_\ell^{ij} : [\ell]  \to \sSet$, where $[\ell]$ is the category $0 \to 1 \to \cdots \to \ell$. 
		Next we consider the commuting square
			\begin{equation}
		\begin{tikzcd}
		\widetilde M_{ij}  \ar{rr}{} \ar[swap]{dd}{\widetilde \pi_\ell^{ij}} & & BG \ar{dd}{} \\
		& & \\
		\Delta_\ell  \ar{rr}{t} & & \star
		\end{tikzcd} \label{eqnGrothendieck2}
		\end{equation} which can be read as a morphism of fibrations over $\Delta_\ell$ going from $\widetilde \pi_\ell^{ij}$ to the pullback $t^* BG$ of the fibration $BG \to \star$ along the unique map $\Delta_\ell \to \star$. 
		Therefore, the functoriality of the straightening gives us a map $F_\ell^{ij} \to \St_{\Delta_\ell} t^*BG$ of diagrams of shape $[\ell]$ with values in simplicial sets. 
		Because of the straightening's compatibility with the pullback we obtain a canonical weak equivalence
		$\St_{\Delta_\ell} t^* BG \to t ^* \St_\star BG$, and by \cite[Remark~3.2.5.5]{htt} $t ^* \St_\star BG$ is just the constant diagram with value $BG$. 
		In summary, we obtain a lift $\widehat{F}_\ell^{ij}: [\ell] \to \sSet /BG$ of $F_\ell^{ij}$ to a diagram taking values in $\sSet /BG$. By taking equivariant higher Hochschild homology we obtain a diagram $\int_{ \widehat{F}_\ell^{ij}} A : [\ell] \to \EAlg$. It is functorial in $(i,j) \in \Sigma^k$.
		Therefore we obtain a diagram of shape $[\ell]$ in the maximal subgroupoid of co-Cartesian functors from $\Sigma^k$ to $E_\infty$-algebras. Let us explain in more detail:
		
		\begin{itemize}
		\item
		
The fact that this construction really takes values in the maximal $\infty$-subgroupoid can be seen as follows: 
Since, by \eqref{pullbackalonguniteqn}, $\widetilde M_{ij}   \to 	\Delta_\ell$ is the pullback of $\Sing M_{ij} \to \Sing |\Delta_\ell|$, its straightening factors -- up to weak equivalence -- through $\mathfrak{C}[\Sing|\Delta_\ell|]$, see \cite[Section~1.1.5]{htt} for the definition. Now the claim follows from the fact that $\Sing|\Delta_\ell|$ is a Kan complex, i.e.\ an $\infty$-groupoid.


\item

		The fact that we produce indeed co-Cartesian functors follows again from excision (Proposition~\ref{satzexcision}). 
			\end{itemize}
		This concludes the definition of \eqref{eqndeftftV} on simplices of arbitrary degree.

		We should emphasize a technical point: When verifying that the maps \eqref{eqndeftftV} are simplicial (in $\ell$), we will notice that the compatibility of the straightening with pullbacks only allows our map to be simplicial \emph{up to coherent homotopy}. However, the map becomes an honest simplicial map once we replace $G\text{-}\PreCob^V(n)$ by an equivalent model, which we are allowed to do since we are only interested in the derived mapping space from $G\text{-}\Cob(n)$ to $\Cospan (\EAlg)$, see Remark~\ref{remprecob}.
		The needed replacement is performed by the following construction: We denote by $U : \sSet \to \Set^{\mathbb{N}_0}$ the forgetful functor from simplicial sets to $\mathbb{N}_0$-graded sets. By left Kan extension, it has a left adjoint $F: \Set^{\mathbb{N}_0} \to \sSet$ given by
		\begin{align} FS = \coprod_{k\in\mathbb{N}_0} \Delta_k \times S_k \end{align} for $S \in \Set^{\mathbb{N}_0}$. The monad corresponding to this adjunction gives us a simplicial bar construction for each $X \in \sSet$, i.e.\ an augmented simplicial object in simplicial sets
		\begin{align}
\underbrace{	\dots \substack{ \longrightarrow \\ \longleftarrow \\ \longrightarrow \\ \longleftarrow \\ \longrightarrow }  F^2 X  \substack{ \longrightarrow \\ \longleftarrow \\ \longrightarrow }FX}_{\Bar_\bullet X} \to X .
		\end{align} Here we suppress the forgetful functor in the notation. The geometric realization 
		\begin{align} \Bar X := |\!\Bar_\bullet X| = \int^{n} \Delta_n \times F^{n+1} X\label{eqnthecoend} \end{align}
		of the simplicial bar construction yields the bar construction of $X$. Since the augmentation $\Bar_\bullet X \to X$ admits extra degeneracies, the induced map $\Bar X \to X$ is a weak equivalence. 
		
		Applying the bar construction just outlined level-wise to the simplicial space describing the $G$-bordism category solves the issue that the maps \eqref{eqndeftftV} are only simplicial \emph{up to coherent homotopy} in $\ell$. The underlying problem is that the straightening $\phi_\bullet := \St_{[\bullet]}$ between the simplicial objects $\sSet / \Delta_\bullet$ and $N \sSet = \sSet^{[\bullet]}$ is only simplicial up to homotopy, which means that for any morphism $f: [m]\to[n]$ in $\Delta$ the square
			\begin{equation}
		\begin{tikzcd}
		\sSet / \Delta_n \ar{rr}{\Delta_f^* } \ar[swap]{dd}{\phi_n := \St_{[n]}} & & \sSet /\Delta_m  \ar{dd}{\phi_m := \St_{[m]}} \ar[Rightarrow, shorten >= 10, shorten <= 10]{ddll}{\simeq} \\
		& & \\
	N_n \sSet =	\sSet^{[n]} \ar{rr}{f^*} & &N_m \sSet =  \sSet^{[m]}
		\end{tikzcd} 
		\end{equation} commutes up to canonical natural weak equivalence.
		 
(Note that for the construction of the equivariant field theories it is essential that these maps are weak equivalences, since our construction has to land in the maximal subgroupoid of co-Cartesian diagrams.)

In particular, we get for $\sigma \in 	\sSet / \Delta_n$ a natural weak equivalence $\phi_m \Delta_f^* \sigma \to f^* \phi_n \sigma$ in $\sSet^{[m]}$. This is equivalently a functor
		\begin{align}
		h(f,\sigma) : [m] \times [1] \to \sSet,
		\end{align} whose restriction to $0$ and $1$ is $\phi_m \Delta_f^* \sigma$ and $f^* \phi_n \sigma$, respectively. 
		Using that the nerve is fully faithful we obtain
		\begin{align}
		\sSet (\Delta_p \times \Delta_q, N\sSet ) \cong  ( N ([p] \times [q])  ,N\sSet ) \cong \sSet^{ [p]\times [q] } \ .
		\end{align} 
		We can see $h(f,\sigma)$ also as a simplicial map making the diagram
\begin{equation}\label{eqnhtpassmap}
\begin{tikzcd}[column sep=large, row sep=large]
\Delta_m \ar[hookrightarrow]{d}{\times 0}\ar{rrd}{\phi_m \Delta_f^* \sigma} & &  \\
 \Delta_m \times \Delta_1 \ar[hookleftarrow]{d}{\times 1} \ar{rr}{h(f,\sigma)}& & N\sSet \\
\Delta_m  \ar[swap]{rru}{f^* \phi_n \sigma} & & 
\end{tikzcd}
\end{equation} commute.
We denote this map by the same symbol.

We will now show that $\phi_\bullet$ induces an honest simplicial map $\widetilde \phi_\bullet : \Bar (\sSet / \Delta_\bullet) \to N\sSet$. By definition such a map consists of compatible maps
\begin{align}
\Delta_p\times F^{p+1}X \longrightarrow N\sSet \end{align} of simplicial sets which, by adjunction, are in bijection to maps \begin{align} F^{p+1}X \longrightarrow N\sSet^{\Delta_p} .
\end{align}
Using that $F$ is a free functor we can describe these maps  equivalently as maps of sets
\begin{align}
(F^p \sSet / \Delta_\bullet)_m \to ((N\sSet)^{\Delta_p})_m= \sSet ( \Delta_p \times \Delta_m ,N\sSet   ) \quad \text{for all}\quad  m \in \N_0
\end{align} 
compatible as prescribed by the coend \eqref{eqnthecoend}.
		For the definition in the case $p=0$ we can obviously use $\phi_*$. For $p=1$, i.e.\ for the definition of
		\begin{align}
		(F \sSet / \Delta_\bullet)_m = \coprod_{n \in \mathbb{N}_0} \Delta(m,n) \times \sSet/\Delta_n \to  \sSet ( \Delta_m \times \Delta_1 , N\sSet  ),
		\end{align} we can use the homotopies \eqref{eqnhtpassmap}. 

For $p=2$ we have to construct maps 
\begin{align}\label{Eq: p=2}
(F^2 \sSet / \Delta_\bullet)_m = \coprod_{n ,k \in \mathbb{N}_0} \Delta(m,k)\times \Delta(k,n) \times \sSet/\Delta_n \to  \sSet ( \Delta_m \times \Delta_2 , N\sSet  ) \ .
\end{align}
Homotopy coherence implies that for composable morphisms $f\in \Delta(m,k)$ and $g\in \Delta(k,n)$ in the simplex category and $\sigma : M \longrightarrow \Delta_n \in \sSet/\Delta_n $ the diagram 
\begin{equation}
\begin{tikzcd}
 & f^*\phi_k \Delta_g^* \sigma \ar{rd}{h(g,\sigma)} \ar[Rightarrow, shorten >= 5, shorten <= 5]{d} & \\
\phi_m \Delta_f^* \Delta_g^* \sigma \ar[swap]{rr}{h(g\circ f,\sigma)} \ar{ru}{h(f,\sigma)} & \ & f^* g^* \phi_n \sigma 
\end{tikzcd}
\end{equation}
commutes up to homotopy $\Delta_m\times \Delta_2 \to N \sSet$. These homotopies are used to define the maps \eqref{Eq: p=2}.

In general we have to construct maps of the following form 
\begin{align}
(F^p \sSet / \Delta_\bullet)_m = \coprod_{n_1, \dots , n_p \in \mathbb{N}_0} \Delta(m,n_p)\times \cdots \times \Delta(n_2,n_1) \times \sSet/\Delta_{n_1} \to  \sSet ( \Delta_m \times \Delta_p , N\sSet  ) \ .
\end{align}   
As above the compatibility of the straightening with pullback up to coherent homotopy yields precisely the desired maps.
These maps are compatible in the sense that they descend to the coend \eqref{eqnthecoend}.

\begin{proof}[Proof (of Theorem~\ref{thmmain})]
The prescribed maps \eqref{eqndeftftV} are simplicial (in $\ell$) after resolving $G\text{-}\PreCob^V(n)$ as just expained. Furthermore, \eqref{eqndeftftV} is obviously natural in $V$ and hence yields the map \eqref{eqndeftft}.

		Next we show that \eqref{eqndeftftV} is simplicial in $k$:	
Let $g  :  [m]\longrightarrow [k]$ be a map in $\Delta$ and 
\begin{align}
(M,f :  M \to B\SO(n),\psi:  M \to BG, \alpha :  f^* \xi \cong TM  ,\underline{I}) 
\end{align} 
a zero simplex of $G\text{-}\PreCob^V(n)_{k}$.
Applying the map $\Cospan(\EAlg)_{k,0}\longrightarrow \Cospan(\EAlg)_{m,0}$ induced by $g$ to the image of $(M,f :  M \to B\SO(n),\psi:  M \to BG, \alpha :  f^* \xi \cong TM  ,\underline{I})$ results in the homotopy coherent diagram
\begin{align}
\Sigma^{m} &\longrightarrow \EAlg \\
(i,j) & \longmapsto Z_A(M_{g(i),g(j)}) \ .
\end{align}
The formula for higher $\ell$-simplices is similar.
Comparing with \eqref{Eq: Simplicial structure for GCob} completes the proof that $Z_A$ defines a map of simplicial spaces, i.e.\ an $\iC$-functor.
\color{black}

To endow $Z_A$ with the structure of a symmetric monoidal functor 
we need to specify maps of Segal spaces 			
\begin{align}
G\text{-}\PreCob^V(n)[m] \to \Cospan (\EAlg)^m \ ,
\end{align}
natural in the finite dimensional vector space $V$, such that
$G\text{-}\PreCob^V(n)[1]  \to \Cospan (\EAlg) $ corresponds to the construction
given above.
Recall from \eqref{Eq: Element Gamma Space} that an element of $G\text{-}\PreCob^V(n)[m]$ is given by 
\begin{align}\label{Eq: Element Gamma Space}
\left( M_1,\dots ,M_m; f_1,\dots ,f_m; \psi_1,\dots ,\psi_m; \varphi_1,\dots ,\varphi_m; (a_0, \dots ,b_k) \right) \ .
\end{align}
Applying the construction described above to $( M_i; f_i; \psi_i; \varphi_i; (a_0, \dots b_k))$ for all $i$ produces $m$ different elements of $\Cospan (\EAlg)$ or equivalently an element of $\Cospan (\EAlg)^m$.

We need to show that 
\begin{equation}
\begin{tikzcd}
G\text{-}\PreCob^V(n)[m] \ar{r} \ar[swap]{d}{\gamma_*} & \Cospan (\EAlg)^m \ar{d}{\gamma_*} \\
G\text{-}\PreCob^V(n)[k] \ar{r}  & \Cospan (\EAlg)^k\label{monoidalsquareeqn}
\end{tikzcd}
\end{equation}
commutes up to coherent homotopies for all morphisms $\gamma: \langle m\rangle \to \langle k\rangle \in \Gamma$. 
Recall that the induced map $G\text{-}\PreCob^V(n)[m] \longrightarrow G\text{-}\PreCob^V(n)[k]$ is given by the disjoint union over the preimages of $\gamma$. Similarly, the induced map $\Cospan (\EAlg)^m \to \Cospan (\EAlg)^k$ is given by taking the tensor product of $E_\infty$-algebras over the preimages of $\gamma$.  The commutativity of \eqref{monoidalsquareeqn} is implied by the following fact:
For maps $\varphi : M \to BG$ and $\varphi' : M' \to BG$ from manifolds (actually from arbitrary spaces) to $BG$ we find
\begin{align}
\Big( \varphi^*EG \sqcup \varphi'^*EG \Big) \stackrel{\mathbb{L}}{\otimes}_G A \cong  \left(\varphi^*EG \stackrel{\mathbb{L}}{\otimes}_G A \right) \otimes \left( \varphi'^*EG \stackrel{\mathbb{L}}{\otimes}_G A \right) 
\end{align}
by a canonical isomorphism.
\end{proof}

\begin{remark}\label{bordismhypothesisrem}
	The existence of the equivariant field theory from Theorem~\ref{thmmain} can also be derived from the cobordism hypothesis as we will sketch now:
	According to the cobordism hypothesis~\cite[Theorem~2.4.26]{lurietft} the space of $n$-dimensional $G$-equivariant field theories with values in an $(\infty,n)$-category $\mathcal{C}$ is given by the space of homotopy fixed points of the $\SO(n)\times G$-action on the $\infty$-groupoid $X$ of fully dualizable objects in $\mathcal{C}$. 
	The group $G$ acts trivially, i.e. the action factors through $\SO(n)$. 
	
	The space of homotopy fixed point of interest is given by $\Hom_{\SO(n)\times G}(E\SO(n)\times EG, X)$, i.e.\ by the space of $\SO(n)\times G$-equivariant maps $E\SO(n)\times EG \longrightarrow X$. We can rewrite this space as 
	\begin{align}
	\Hom_{\SO(n)\times G}(E\SO(n)\times EG, X) \cong \Hom_{G}(EG, \Hom_{\SO(n)}(E\SO(n), X)) \ ,
	\end{align}      
	where we should see $\Hom_{\SO(n)}(E\SO(n), X) $ as the space of oriented topological field theories. Since the $G$-action on $\Hom_{\SO(n)}(E\SO(n), X) $ is trivial, a $G$-equivariant topological field theory is the same as a map
	\begin{align}
	BG \to  \Hom_{\SO(n)}(E\SO(n), X)  ,
	\end{align}
	i.e.\ an oriented topological field theory with homotopy coherent $G$-action.  
	
	Now choose as the target $(\infty,n)$-category the category of iterated cospans in $\EAlg$. By~\cite[Theorem 13.3]{haugseng} every object in this category is fully dualizable. 
	
	A $G$-action $BG\longrightarrow \EAlg$ on an $E_\infty$-algebra $A$ can be 
	considered as an action in the cospan category by sending $g :  A\longrightarrow A$ to the cospan 
	\begin{equation}
	\begin{tikzcd}
	& A & \\
	A\ar{ru}{g} &  & A \ar[swap]{lu}{\id} 
	\end{tikzcd} \ .
	\end{equation}
	If we now assume that every $E_\infty$-algebra can be equipped with `a trivial' $\SO(n)$-fixed point structure in the $(\infty,n)$-category of cospans (corresponding to ordinary higher Hochschild homology), 
	we see that every $G$-action 
	on a given $E_\infty$-algebra induces a $G$-equivariant topological field theory. In this article we have constructed this theory explicitly, or rather its $(\infty,1)$-version.  	
	\end{remark}

\subsection{Examples and application}
We conclude the paper  with a few examples and applications that involve some concrete computations of equivariant Hochschild homology.
		
		\subsubsection{Equivariant higher Hochschild homology of the circle\label{excircle}}
	As a preliminary consideration leading to the computation of equivariant Hochschild homology of the circle below, consider the field theory 	\begin{align} Z_A : G\text{-}\Cob(n) \to \Cospan (\EAlg)
		\end{align} associated to a $G$-equivariant $E_\infty$-algebra $A$ by Theorem~\ref{thmmain}. 
		Given two objects $\varphi_0, \varphi_1 : \Sigma \to BG$ and a homotopy $\varphi_0 \stackrel{h}{\simeq} \varphi_1$, we can evaluate $Z_A$ on $h: \Sigma \times [0,1] \to BG$. The resulting cospan is given by
		\begin{align}
		Z_A (h: \Sigma \times [0,1] \to BG) = \left(  \varphi_0^* EG \stackrel{\mathbb{L}}{\otimes}_G   A \to h^* EG \stackrel{\mathbb{L}}{\otimes}_G A \longleftarrow \varphi_1^* EG \stackrel{\mathbb{L}}{\otimes}_G A    \right).
		\end{align} Using \begin{itemize}\item the isomorphism $\varphi_0^* EG \cong \varphi_1^* EG$ induced by $h$
		\item and the $G$-equivariant equivalence $h^*EG \to \varphi_1^* EG$ sending a pair $( (x,t) , p  ) \in h^* EG$ with $(x,t) \in \Sigma \times [0,1]$ and $p \in EG_{h(x,t)}$ to the parallel transport of $p$ along the path in $BG$ from $h(x,t)$ to $h(x,1)$ parametrized by the homotopy $h$,\end{itemize}
		 we find the commutative diagram
\begin{equation}
\begin{tikzcd}[ row sep=small]
 & h^*EG \stackrel{\mathbb{L}}{\otimes}_G A  \ar{dd}{\simeq} & \\
 \varphi_0^* EG \stackrel{\mathbb{L}}{\otimes}_G A \ar{ru} \ar{rd}{h} & & \varphi_1^* EG \stackrel{\mathbb{L}}{\otimes}_G A \ar{lu}\ar{ld}{=} \\
  & \varphi_1^* EG \stackrel{\mathbb{L}}{\otimes}_G A &
\end{tikzcd}
\end{equation}		
and conclude
		\begin{align}
			Z_A (h: \Sigma \times [0,1] \to BG)\simeq  \left(  \varphi_0^* EG \stackrel{\mathbb{L}}{\otimes}_G   A \stackrel{h}{\to} \varphi_1^* EG \stackrel{\mathbb{L}}{\otimes}_G A  \stackrel{=}{\longleftarrow} \varphi_1^* EG \stackrel{\mathbb{L}}{\otimes}_G A    \right).\label{prepeqn}
			\end{align}

		We use this to compute the value of 
		\begin{align} Z_A : G\text{-}\Cob(1) \to \Cospan (\EAlg)
		\end{align} on the circle decorated with a map $\varphi : \mathbb{S}^1 \to BG$: First note that the $G$-bundle $\varphi^* EG \to \mathbb{S}^1$ described by $\varphi$ is fully characterized by its holonomy $g \in G$ around the circle. 
For simplicity we will assume that $A$ is a commutative algebra.
We will use the decomposition 
	\begin{center}
		\begin{overpic}[scale=0.35
			,tics=10]
			{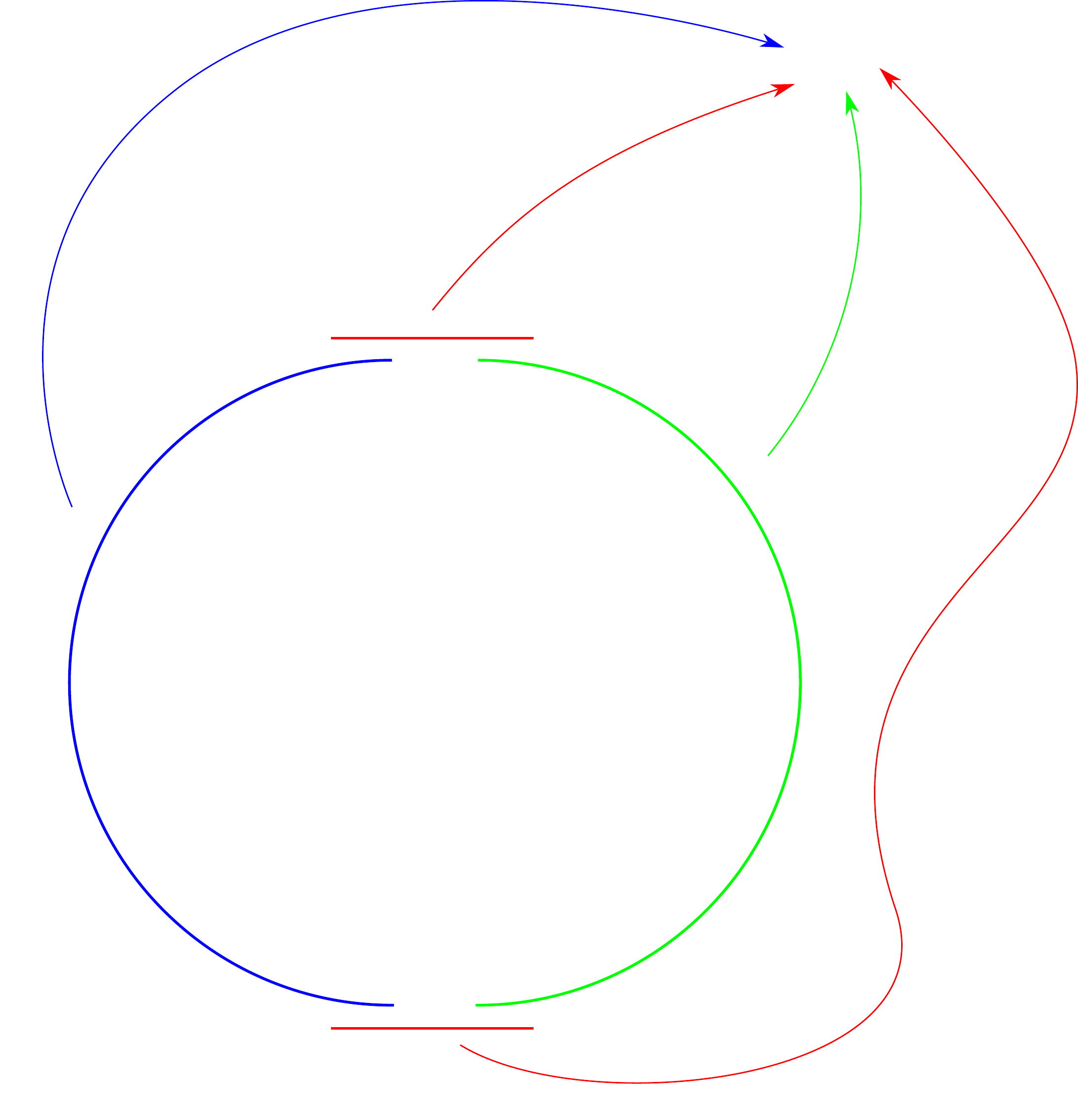}
			\put(33,3){\color{red}$A$}
			\put(33,72){\color{red}$A$}
			\put(0,35){\color{blue}$A$}
			\put(72,35){\color{green}$A$}
			\put(71,93){$BG$}
			\put(85,32){\color{red}$*$}
			\put(50,86){\color{red}$g$}
			\put(74,63){\color{green}$*$}
			\put(10,82){\color{blue}$*$}
		\end{overpic}
	\end{center}
of $\mathbb{S}^1$. Here $*$ denotes the constant map into $BG$.
According to this decomposition,  we get the element 
\begin{equation}\label{diagramcocart}
\begin{tikzcd}[row sep=scriptsize]
 &   &   & \hspace{-1cm}Z_A(\varphi : \mathbb{S}^1 \to BG)\hspace{-1cm} & & & \\
 &   & A \ar{ru}{}&  & \ar{lu}{} A & & \\
 & A \ar{ru}{} & & A\otimes A \ar[swap]{ru}{\mu } \ar{lu}{} & & \ar{lu}{} A & \\
k \ar{ru}{1} & & A \otimes A \ar[swap]{ru}{g\otimes \id} \ar{lu}{\mu} & & A\otimes A \ar[swap]{ru}{\mu } \ar{lu}{\id} & & \ar{lu}{1} k 
\end{tikzcd}
\end{equation}of $\Cospan (\EAlg)_4$,
where $\mu$ is the multiplication map.
Here we have used \eqref{prepeqn} and the fact
that for a half-circle decorated with the trivial $G$-bundle, the inclusion of the endpoints induces the multiplication $\mu : A \otimes A \to A$ of $A$. This is true for non-equivariant higher Hochschild homology (as can be extracted from \cite[Example~4 (1)]{gtz}) and hence by Lemma~\ref{lemmatrivialbundle} gives us the statement in the equivariant case as long as the bundle used for decoration is trivial.

Using that diagram~\eqref{diagramcocart} is co-Cartesian, we find the homotopy pushout square
		\begin{equation}
		\begin{tikzcd}
		A \otimes A  \ar{rr}{\mu \circ (g\otimes  \id_A)} \ar[swap]{dd}{\mu} & & A \ar{dd}{}  \\
		& & \\
		A  \ar{rr}{} & & Z_A(\varphi : \mathbb{S}^1 \to BG) 
		\end{tikzcd}\label{eqnexcision1}  .
		\end{equation}
 This homotopy pushout is the derived tensor product of the $A\otimes A$-modules $A$ with usual $A\otimes A$-action and $A$ with an $A\otimes A$-action twisted by $g$ as prescribed by the diagram (we call this $A\otimes A$-module $A_g$), i.e.\
		\begin{align}
		Z_A(\varphi : \mathbb{S}^1 \to BG)  = \int_{\varphi } A \simeq A \stackrel{\mathbb{L}}{\otimes}_{A\otimes A} A_g.\label{twsitedHHeqn}
 		\end{align}
 		Consequently, the homology of $	Z_A(\varphi : \mathbb{S}^1 \to BG)$ is the Hochschild homology of $A$ with coefficients in $A_g$. 
 		
 		In the special case $G=\mathbb{Z}$, the equivariant structure on $A$ is determined by an isomorphism $\phi : A \to A$. By \cite[Corollary~5]{ginot} this data can be used to model a locally constant factorization algebra on the circle; and twisted Hochschild homology as in \eqref{twsitedHHeqn} computes its factorization homology.

\subsubsection{Equivariant Dijkgraaf-Witten models and their orbifoldization\label{exdw}}
		Non-equivariant Dijk\-graaf-Witten theories are a type of topological field theory constructed from a finite group $G$ (the construction below will work for any discrete group). They were developed in \cite{freedquinn, morton1} based on \cite{dijkgraafwitten}. 
		The idea is to use the bundle groupoid of a manifold as a configuration space and to build the topological field theory from the (invariant) functions on that groupoid.

	Dijkgraaf-Witten theory is usually constructed with vector spaces as a target. We will try to transfer the ideas to obtain a homotopical topological field theory $\Cob(n) \to \Cospan (\EAlg)$ that can deservedly be called a Dijkgraaf-Witten theory. 
	
	Dijkgraaf-Witten theory is built from vector spaces of functions on the set of isomorphism classes of $G$-bundles over manifolds of different dimensions to the ground field $k$.
Locally, i.e.\ over a contractible manifold, we can see this vector space as the zeroth cohomology of the (normalized) $k$-cochains on the nerve of $\star \DS G$
	\begin{align}
	\dots \substack{ \longrightarrow \\ \longleftarrow \\ \longrightarrow \\ \longleftarrow \\ \longrightarrow }  G  \substack{ \longrightarrow \\ \longleftarrow \\ \longrightarrow }\star .
	\end{align} 
		In order to pass to a homotopical setting, it seems reasonable to replace functions on isomorphism classes by the entire complex of normalized cochains $N^*(\star \DS G; k)=N^*(BG;k)$ with coefficients in $k$.
		The cochains on a space form an $E_\infty$-algebra \cite{bergerfresse}, so we obtain from $N^*(\star \DS G; k)$ an 
		$(\infty,1)$-topological field theory
				\begin{align}
				Z_G := Z_{N^*(\star \DS G;k)} : \Cob(n) \to \Cospan (\EAlg)
				\end{align} that we call the \emph{$(\infty,1)$-Dijkgraaf-Witten theory for the group $G$.}
		Note that in the non-homotopical setting one requires that the ground field $k$ has characteristic zero to ensure that the representation theory of $G$ is semisimple. We do not need this for our considerations.

		The $E_\infty$-algebra that $Z_G$ assigns to a closed oriented $n$-dimensional manifold $M$ is the factorization homology of $M$ with coefficients in $N^*(\star \DS G;k)$. 
		Na\"ively, one might expect that this $E_\infty$-algebra 
		is given by the cochains on the groupoid of $G$-bundles over $M$. However, such a result is just available in the following special case: If $G$ is a finite nilpotent group, then by \cite[Proposition~5.3]{af} 
		\begin{align}
		Z_G(M) = \int_M N^*(\star \DS G;k) \simeq N^* (  BG^M;k   ) . 
		\end{align} Since the space $BG^M$ of maps $M \to BG$ is equivalent to the nerve of the groupoid $\PBun_G(M)$ of $G$-bundles over $M$, we find in that case
		\begin{align}
		Z_G(M) \simeq N^*(\PBun_G(M);k).\end{align}

		We will now interpret equivariant Dijkgraaf-Witten models in this fashion. These were constructed in \cite{maiernikolausschweigerteq} from a short exact sequence of groups. In \cite{schweigertwoikeofk, MuellerWoike1} the input was generalized to an arbitrary group morphism $\lambda : G \to H$ and non-trivial twists. The configuration spaces of the $H$-equivariant Dijkgraaf-Witten model that $\lambda$ gives rise to are homotopy fibers of the functor from $G$-bundles to $H$-bundles induced by $\lambda$. Over the point there is only one $H$-bundle and its homotopy fiber can easily be seen to be the action groupoid $H\DS G$, where $G$ acts on $H$ by $\lambda$ from the left. This space carries naturally a right action of the automorphism group of the $H$-bundle over the point, namely $H$, which turns $N^*(H\DS G;k)$ into an $H$-equivariant $E_\infty$-algebra. The associated $H$-equivariant field theory
		\begin{align}
		Z_\lambda := Z_{N^*(H\DS G;k)} : H\text{-}\Cob(n) \to \Cospan (\EAlg)
		\end{align} provided by Theorem~\ref{thmmain} will be referred to as the \emph{$H$-equivariant $(\infty,1)$-topological field theory induced by $\lambda$}. 
		
		So far, using the name \emph{Dijkgraaf-Witten} for the above theories is only loosely motivated, but we will show now that with these definitions we can find results analogous to those obtained for non-homotopical Dijkgraaf-Witten models. Our sample computation concerns orbifoldization \cite{maiernikolausschweigerteq, schweigertwoikeofk, extofk}, a construction which produces from a equivariant theory a non-equivariant theory by taking invariants. 
		
		For a $G$-equivariant $E_\infty$-algebra $A$ it appears reasonable to call the non-equivariant theory $Z_A /G := Z_{\holimsub{G} A}$ the \emph{orbifold theory} of the $G$-equivariant theory $Z_A$. 
		
		In order to compute the orbifold of $Z_\lambda$ we observe that taking cochains on a simplicial set is a left Quillen functor when considered as taking values in the opposite category of chain complexes. It furthermore preserves weak equivalences, from which we deduce that it takes homotopy colimits to homotopy limits, i.e.\
		\begin{align}
			\holimsub{H }N^*(H\DS G;k) \simeq N^* (\hocolimsub{H} H\DS G;k    ). 
			\end{align}
			Since the action of $H$ on the space $H\DS G$ is free,
			\begin{align}
			\hocolimsub{H} H\DS G \simeq \colimsub{H} H\DS G,
			\end{align} 
			and the right hand side can be easily seen to be given by $\star  \DS G$. 
			This proves
			\begin{align}
			\frac{Z_\lambda}{H} \cong Z_G.
			\end{align}
			In the non-homotopical setting this result was proven in \cite{schweigertwoikeofk, extofk} showing that the homotopical Dijkgraaf-Witten theories behave similarly to the non-homotopical ones.

\end{document}